\documentclass{amsart}
\usepackage[T1]{fontenc}
\usepackage{graphicx}
\usepackage{amsmath,amssymb}
\usepackage{empheq}
\usepackage{amsthm}
\usepackage{paralist}
\usepackage{hyperref}
\usepackage{caption}
\hypersetup{hidelinks}

\newtheorem{thm}{Theorem}[section]

\newtheorem{prop}[thm]{Proposition}

\theoremstyle{definition}

\theoremstyle{remark}
\newtheorem{rem}[thm]{Remark}
\numberwithin{equation}{section}

\begin{document}
\title[]{Tumor boundary instability induced by pressure feedback}%

\author[Y. Feng]{Yu Feng}%
\address{Yu Feng:  Department of Mathematics, Great Bay University, Dongguan, China, 523000, P.R.China }
\email{fengyu@gbu.edu.cn}

\author[Y. Zhang]{Yeyu Zhang}%
\address{Yeyu Zhang: School of Mathematics, Shanghai University of Finance and Economics, Shanghai 200433, China}
\email{zhangyeyu@mail.shufe.edu.cn}

\author[K. Zhou]{Kewei Zhou}%
\address{Kewei Zhou: School of Mathematics, Shanghai University of Finance and Economics, Shanghai 200433, China}
\email{2024213176@stu.sufe.edu.cn}

\date{\today}

\subjclass[2020]{Primary 35R35; Secondary 35B32,
35B35, 76D27, 92C10.}%

\keywords{Tumor growth; Hele--Shaw free-boundary problem;
pressure feedback; boundary instability; incompressible limit; local
bifurcation.}
\thanks{}

\begin{abstract}
We consider a Hele--Shaw-type tumor growth model obtained from the
incompressible limit of a porous-medium equation. The limiting model
preserves the patch structure of the tumor density and therefore admits
a single free-boundary formulation. We investigate how a
pressure-feedback parameter associated with the homeostatic pressure
affects boundary stability. Using asymptotic analysis, we study boundary
perturbations of finite-radius tumors and planar traveling fronts and
derive the corresponding stability criteria. For planar
fronts, the stability thresholds are further shown to be bifurcation
points from which nonsymmetric traveling waves emerge. Our results indicate
that pressure feedback can induce boundary instability in parameter
regimes where the model without pressure feedback remains stable.
\end{abstract}

\maketitle
\section{Introduction}

The morphology and evolution of the tumor boundary reflect the combined
effects of cell proliferation, nutrient availability, and mechanical
interactions, and provide a macroscopic description of tumor development.
Understanding when a smooth tumor boundary remains stable or develops
irregular structures is therefore relevant to the mechanisms governing
tumor growth and invasion. Since these processes involve coupled biological
and mechanical effects acting across multiple spatial and temporal scales,
mathematical modeling, model analysis,
and numerical simulation have become important tools for investigating tumor
boundary dynamics.

Mathematical models of tumor growth have been developed within several
continuum frameworks, including reaction--diffusion systems
\cite{roose2007mathematical}, phase-field models
\cite{chatelain2011emergence,lowengrub2009nonlinear}, and free-boundary
models \cite{friedman1999analysis}. Reaction--diffusion models
describe the evolution of tumor-cell density and its interaction with
nutrients and other biological components, whereas free-boundary models
represent the tumor directly as an evolving domain; phase-field models
provide an alternative diffuse-interface description. A natural connection
between density-based models and sharp-interface models is provided by the
incompressible limit of porous-medium-type equations. As the pressure law
becomes increasingly stiff, the cell density approaches a saturated patch,
and the boundary of its support evolves according to a Hele--Shaw-type
free-boundary problem. Since the seminal work of Perthame, Quir\'os, and
V\'azquez \cite{perthame2014hele}, this limiting process has become an active
direction in the mathematical study of tumor growth. A more detailed overview
of the related developments is provided in the following paragraph.

At the analytical level, subsequent studies established the incompressible
limit in different solution frameworks and under increasingly general
assumptions. The limiting Hele--Shaw problem has been characterized using
weak solutions \cite{mellet2017hele} and viscosity solutions
\cite{kim2016free}. The convergence has also been extended to general initial
densities \cite{kim2018porous} and growth laws without a monotonicity
assumption \cite{guillen2022hele}. More recently, Tong and
Zhang \cite{tong2025convergence} proved that the free boundaries themselves
converge in the Hausdorff distance and obtained geometric estimates for the
limiting interface. The limiting procedure has further been
justified for models incorporating viscosity
\cite{perthame2015incompressible}, nutrient coupling \cite{david2021free},
and convective transport \cite{david2024incompressible}. Free-boundary
limits have also been studied for reaction--cross-diffusion
systems \cite{dou2022tumor}. For porous-medium equations with chemotactic
aggregation, the limit was studied first for the Patlak--Keller--Segel
model and its stationary states
\cite{he2023pks}, and subsequently in the presence of pressure-dependent
growth \cite{he2025chemotaxis}. Models allowing necrotic regions lead to
an obstacle problem for the pressure, whose coincidence set represents
the necrotic core \cite{dou2024tumor}.

In parallel, numerical methods have been developed to
capture the free-boundary dynamics arising in the stiff-pressure regime.
These include methods connecting cell-density models with their free-boundary
limits \cite{liu2018analysis}, accurate front-capturing schemes
\cite{liu2018accurate}, and numerical approaches for resolving boundary
propagation speeds \cite{liu2021toward}. These developments have recently
been extended to a stabilized computational framework for tumors with
necrotic cores \cite{feng2026tumor}, stochastic asymptotic-preserving schemes
\cite{jiang2025efficient}, and multi-fidelity methods for uncertainty
quantification \cite{yu2026multifidelity}. Related inverse problems have
also received increasing attention. Bayesian inversion has been used to infer
model parameters under different pressure laws \cite{feng2024unified}, while
Lipschitz stability with respect to the porous-medium exponent has been
established to support Bayesian parameter inference
\cite{debiec2025lipschitz}. Data-driven methods based on physics-informed
neural networks and DeepONet have further been applied to parameter
identification from experimental data \cite{liu2027data}.

Another line of research concerns boundary stability in
free-boundary tumor models.
A pioneering contribution was made by Cristini, Lowengrub, and Nie
\cite{cristini2003nonlinear}, who combined linear analysis of boundary
perturbations with boundary-integral simulations to investigate their
nonlinear evolution, including fingering instabilities and topological
changes.
For the
classical free-boundary models originating with Greenspan
\cite{greenspan1972models}, Friedman and his
collaborators
established the asymptotic stability of radially symmetric solutions
\cite{friedman1999analysis,friedman2006asymptotic} and investigated
symmetry-breaking bifurcations leading to nonsymmetric stationary states
\cite{friedman2001symmetry,friedman2006bifurcation,
friedman2008stability}. Subsequent studies have examined a
range of mechanisms affecting boundary stability and symmetry breaking,
including delayed cell proliferation \cite{zhao2020impact}, chemotaxis
\cite{lu2022bifurcation}, heterogeneous nutrient supply
\cite{lu2020complex}, and growth inhibitors
\cite{wu2012bifurcation}. Many of these models impose the Laplace--Young condition
\(p=\sigma\kappa\), where \(\sigma\) is the surface-tension coefficient and
\(\kappa\) is the mean curvature, or related boundary conditions involving
curvature. Accordingly, the coefficient associated with the curvature term
often serves as the stability or bifurcation parameter.

For tumor models arising from porous-medium equations and
their incompressible limits, Kim and
collaborators studied the regularity and stability of the resulting tumor
patches. In particular, regularizing dynamics and boundary regularity were
established in the absence of nutrient diffusion and cell death
\cite{jacobs2023tumor}, while, for small nutrient diffusion, the patch
boundary was shown to remain uniformly close in the Hausdorff distance to
the smooth boundary obtained in the zero-diffusion limit
\cite{kim2023tumor}. More
recently, the space-time geometry of Hele--Shaw flows with a nonnegative
source was studied in
\cite{collins2026spacetime}, where the regular and singular behavior of the
free boundary, including collision and hole-closing events, was characterized;
these results apply in particular to nutrient-coupled tumor growth models.
In our previous works, we adapted
perturbative boundary-stability analysis
\cite{feng2023tumor} and bifurcation methods \cite{feng2025nonsymmetric}
developed for classical free-boundary models to PME-derived Hele--Shaw models.
A three-dimensional extension of the finite-radius
boundary-instability analysis was subsequently developed in
\cite{liu2024three}.
A fundamental difference is that the pressure in the
PME-derived models considered in these works vanishes
on the tumor boundary. Hence, the
curvature coefficient employed in the classical models is absent and cannot
serve as a parameter governing the transition to boundary instability. It is
therefore necessary to identify other biological or physical mechanisms
capable of destabilizing the boundary. Our previous results showed that
nutrient consumption, together with the nutrient-supply regime, provides such
a mechanism.

In addition to nutrient consumption, pressure-dependent
inhibition of cell proliferation may also affect boundary stability.
Cell proliferation increases local pressure,
whereas elevated pressure suppresses cell division and may arrest net growth
at the homeostatic pressure. Ye and Lin \cite{ye2024fingering} showed that this
pressure--growth coupling can induce fingering instability in a proliferating
cell collective: boundary protrusions relax the local pressure and thereby
enhance cell growth near their tips. Motivated by this observation, we incorporate a
pressure-dependent inhibitory term into a nutrient-coupled porous-medium tumor
growth model and investigate how the corresponding pressure-feedback
parameter affects the boundary stability of its incompressible Hele--Shaw
limit. In particular, the limit as pressure feedback vanishes is not uniform
with respect to the tumor radius: even an arbitrarily small positive
pressure-feedback parameter can destabilize sufficiently large tumors in
regimes where the model without pressure feedback remains stable.

The remainder of this paper is organized as follows. In Section~2, we formally
derive the Hele--Shaw model considered in this paper from a porous-medium
equation. In Section~3, we investigate the boundary stability of finite-radius
tumors and planar traveling fronts through asymptotic analysis. In Section~4,
we establish the existence of nonsymmetric traveling waves by local
bifurcation analysis. Finally, Section~5 summarizes the main results and
discusses directions for future research.

\subsection*{Acknowledgments}

The authors thank Qingyou He for helpful discussions.

Y.~Feng was partially supported by the National Key R\&D Program of China,
Project No. 2021YFA1001200, and the NSFC Youth Program, Grant No. 12501669.

Y.~Zhang was supported in part by the National Natural
Science Foundation of China (NSFC), Grant Nos. 12401562, 12671494, and
12241103, and by the Ministry of Science and Technology of the People's
Republic of China, Grant No. 2024YFA1012401.

All mathematical calculations and proofs in this work
were carried out by the authors. Generative artificial intelligence
tools were used solely for language editing and for assisting in
checking the calculations and proofs. All AI-assisted suggestions were
independently reviewed and verified by the authors, who take full
responsibility for the content of this work.

\section{Model introduction}

\subsection{A porous-medium model for tumor growth}

We consider a tumor growing in $\mathbb{R}^2$. For $m>1$, let
$\rho_m=\rho_m(t,x)$ denote the tumor-cell density and let
\begin{equation}
\label{eqn:D_set}
D_m(t):=\{x\in\mathbb{R}^2:\rho_m(t,x)>0\}
\end{equation}
denote the region occupied by tumor cells. The pressure
$p_m=p_m(t,x)$ generated by cell
compression is related to the density through the power law
\begin{equation}
\label{eqn:pressure_law}
p_m(\rho_m)=\rho_m^m.
\end{equation}
The cell velocity is assumed to obey Darcy's law,
$u_m=-\nabla p_m$. Consequently, conservation of cell mass gives the
reactive porous-medium equation
\begin{equation}
\label{eqn:PME_density}
\partial_t\rho_m-\nabla\cdot(\rho_m\nabla p_m)
=\rho_m\Phi(p_m,c_m),
\qquad x\in\mathbb{R}^2,\qquad t>0,
\end{equation}
where $c_m=c_m(t,x)$ is the nutrient concentration and
$\Phi$ is the net cell proliferation rate. We take the growth function
to be
\begin{equation}
\label{eqn:growth_rate_fcn}
\Phi(p,c)=\frac{c}{c_B}-\frac{p}{p_M},
\end{equation}
where $c_B>0$ is the background nutrient concentration and $p_M>0$
is the homeostatic pressure scale. Biologically, the
nutrient-dependent term promotes cell proliferation, whereas the
pressure-dependent term represents mechanical inhibition of growth.
The strict decrease of $\Phi$ with respect to $p$ satisfies the
structural assumption in \cite{perthame2014hele}. This condition is
essential for applying the incompressible-limit and patch-preservation
results recalled in the next subsection.

Equation \eqref{eqn:PME_density} is degenerate parabolic. In particular,
compactly supported initial data propagate with finite speed,
so that $D_m(t)$ remains bounded on every finite time interval.
At a regular point $x\in\partial D_m(t)$, the boundary
expansion speed is given by Darcy's law as
\begin{equation}
\label{eqn:finite_m_boundary_velocity}
V_n(t,x)=u_m(t,x)\cdot n(t,x)
=-\nabla p_m(t,x)\cdot n(t,x).
\end{equation}
$n(t,x)$ is the outward unit normal vector at $x$.

We couple \eqref{eqn:PME_density} to the \emph{in vivo} nutrient regime
considered in \cite{feng2023tumor}. The saturated region associated
with the porous-medium equation is defined by
\begin{equation}
\label{eqn:S_set}
S_m(t):=\{x\in\mathbb{R}^2:\rho_m(t,x)\geq 1\}.
\end{equation}
Within $S_m(t)$, we assume nutrient is consumed at a rate
proportional to the cell density. In its complement, nutrient is supplied by the surrounding
tissue at a rate proportional to the nutrient deficit $c_B-c_m$ and to
the available volume fraction $(1-\rho_m)_+$. The nutrient concentration
therefore satisfies
\begin{subequations}
\label{eqn:finite_m_invivo}
\begin{alignat}{2}
\tau\partial_t c_m-\Delta c_m+\lambda\rho_m c_m
    &=0,
    \quad &&\text{in } S_m(t),
    \label{eqn:finite_m_c_in}\\
\tau\partial_t c_m-\Delta c_m
    &=(1-\rho_m)_+(c_B-c_m),
    \quad &&\text{in } \mathbb{R}^2\setminus S_m(t),
    \label{eqn:finite_m_c_out}
\end{alignat}
\end{subequations}
where $\lambda>0$ is the nutrient consumption rate and $\tau>0$ is the
ratio of the nutrient-diffusion time scale to the cell-evolution time
scale. Here and below, \(s_+:=\max\{s,0\}\) denotes the
positive part of \(s\).
The concentration and its normal derivative are continuous
across $\partial S_m(t)$.

\subsection{The incompressible limit and patch solutions}

We next pass formally to the incompressible, or mesa, limit
$m\to\infty$. This limit connects the density model introduced in the
preceding subsection to a Hele--Shaw free boundary problem. Indeed, multiplying
\eqref{eqn:PME_density} by $m\rho_m^{m-1}$ and using
\eqref{eqn:pressure_law}, we obtain
\begin{equation}
\label{eqn:finite_m_pressure}
\partial_t p_m
=|\nabla p_m|^2
+mp_m\bigl(\Delta p_m+\Phi(p_m,c_m)\bigr).
\end{equation}
Let $(\rho_\infty,p_\infty,c_\infty)$ denote the limit
of $(\rho_m,p_m,c_m)$ as $m\to\infty$. The leading-order balance in
\eqref{eqn:finite_m_pressure} as $m\to\infty$ yields the complementarity
relation
\begin{equation}
\label{eqn:complementarity}
p_\infty\bigl(\Delta p_\infty
    +\Phi(p_\infty,{c_\infty})\bigr)=0.
\end{equation}
The limiting density satisfies
\begin{equation}
\label{eqn:rho_infty}
\partial_t\rho_\infty
-\nabla\cdot(\rho_\infty\nabla p_\infty)
=\rho_\infty\Phi(p_\infty,{c_\infty})
\end{equation}
in the sense of distributions. Moreover, $\rho_\infty$ and $p_\infty$
satisfy the Hele--Shaw graph relation
\begin{equation}
\label{eqn:HS_graph}
p_\infty\in P_\infty(\rho_\infty),
\qquad
P_\infty(s)=
\begin{cases}
\{0\}, & 0\leq s<1,\\[2pt]
[0,\infty), & s=1.
\end{cases}
\end{equation}
In particular, $0\leq\rho_\infty\leq1$, and $p_\infty$ can
be positive only in the saturated region $\Omega(t)$ associated with the
Hele--Shaw problem, defined by
\begin{equation}
\label{eqn:saturated_region}
\Omega(t):=\{x\in\mathbb{R}^2:\rho_\infty(t,x)=1\}.
\end{equation}

For notational simplicity, from now on we set
$\gamma:=1/p_M>0$. Within the saturated region $\Omega(t)$,
\eqref{eqn:growth_rate_fcn}, \eqref{eqn:rho_infty}, and
\eqref{eqn:HS_graph}, together with Darcy's law, give the pressure
problem
\begin{subequations}
\label{eqn:limitpressure}
\begin{alignat}{2}
-\Delta p_\infty+\gamma p_\infty
    &=\frac{c_\infty}{c_B},
    \quad &&\text{in } \Omega(t),
    \label{eqn:pressure_equation}\\
p_\infty
    &=0,
    \quad &&\text{on } \partial\Omega(t),
    \label{bc:p_zero}\\
V_n
    &=-\nabla p_\infty\cdot n,
    \quad &&\text{on } \partial\Omega(t),
    \label{eqn:boundary_speed}
\end{alignat}
\end{subequations}
where $n$ is the outward unit normal to $\Omega(t)$ and $V_n$ is the
normal velocity of its boundary.

As long as the nutrient concentration remains positive,
the strong maximum principle applied to
\eqref{eqn:pressure_equation} yields $p_\infty>0$ throughout
$\Omega(t)$. Since \eqref{eqn:rho_infty} preserves the zero-density
phase, this positivity and the Hele--Shaw graph relation
\eqref{eqn:HS_graph} imply that an initial density patch
retains its patch structure throughout the incompressible-limit
evolution. We may therefore write
\begin{equation}
\label{eqn:patch_solution}
\rho_\infty(t,x)=\chi_{\Omega(t)}(x),
\end{equation}
where $\chi_A$ denotes the characteristic function of a
set $A$. The evolution of the patch boundary is governed by
\eqref{eqn:boundary_speed}.

We next consider the nutrient evolution. Since nutrient
diffusion occurs on a faster time scale than tumor growth, we impose
the quasi-static approximation by setting $\tau=0$ in
\eqref{eqn:finite_m_invivo}. As established in
\cite{feng2025nonsymmetric}, this quasi-static reduction preserves the
patch structure \eqref{eqn:patch_solution}. For clarity, we decompose
$c_\infty$ with respect to the sharp interface $\partial\Omega(t)$. Let
$c_\infty^{\mathrm{(i)}}:=c_\infty|_{\Omega(t)}$ and
$c_\infty^{\mathrm{(o)}}:=
c_\infty|_{\mathbb{R}^2\setminus\overline{\Omega(t)}}$ denote the
interior and exterior nutrient concentrations, respectively. For a
patch solution, the right-hand side of \eqref{eqn:pressure_equation}
can therefore be written more specifically as
$c_\infty^{\mathrm{(i)}}/c_B$, and the nutrient concentrations satisfy
\begin{subequations}
\label{eqn:invivonutrient}
\begin{alignat}{2}
-\Delta {c_\infty^{\mathrm{(i)}}}
    +\lambda {c_\infty^{\mathrm{(i)}}}
    &=0,
    \quad &&\text{in } \Omega(t),
    \label{eqn:c_in}\\
-\Delta {c_\infty^{\mathrm{(o)}}}
    +{c_\infty^{\mathrm{(o)}}}
    &=c_B,
    \quad &&\text{in } \mathbb{R}^2\setminus\overline{\Omega(t)},
    \label{eqn:c_out}\\
{c_\infty^{\mathrm{(i)}}}
    &={c_\infty^{\mathrm{(o)}}},
    \quad &&\text{on } \partial\Omega(t),
    \label{bc:c_continuity}\\
\partial_n {c_\infty^{\mathrm{(i)}}}
    &=\partial_n {c_\infty^{\mathrm{(o)}}},
    \quad &&\text{on } \partial\Omega(t).
    \label{bc:flux_continuity}
\end{alignat}
\end{subequations}
Note that \eqref{eqn:c_out} yields
${c_\infty^{\mathrm{(o)}}}(t,x)\to c_B$ as $|x|\to\infty$.
Moreover, at each fixed time, the maximum principle for the transmission problem
ensures that $c_\infty^{\mathrm{(i)}}$ and
$c_\infty^{\mathrm{(o)}}$ are positive in their respective domains.

Although the derivation above has been presented formally, the passage
from \eqref{eqn:PME_density}--\eqref{eqn:finite_m_invivo} to the coupled
free boundary system \eqref{eqn:limitpressure}--\eqref{eqn:invivonutrient},
including the incompressible limit and the quasi-static nutrient
approximation, can be justified rigorously within the frameworks of
\cite{perthame2014hele,david2021free,david2024incompressible,
feng2025nonsymmetric}. Since this justification follows the existing arguments and is not the
purpose of the present work, we omit its proof.
In the subsequent sections, we focus on the boundary stability of
\eqref{eqn:limitpressure}--\eqref{eqn:invivonutrient}; for notational
simplicity, the subscript $\infty$ on the pressure and nutrient variables
is henceforth omitted.

\section{Asymptotic analysis of boundary stability}

In \cite{feng2023tumor}, the boundary stability of the Hele--Shaw model
\eqref{eqn:limitpressure}--\eqref{eqn:invivonutrient} was studied in the
special case \(p_M=+\infty\), or equivalently \(\gamma=0\). It was shown
that sufficiently strong nutrient consumption may destabilize
low-frequency boundary modes, while high-frequency modes remain stable.
In the present work, we fix the nutrient consumption rate \(\lambda>0\)
and investigate how the pressure-feedback parameter
\(\gamma>0\) affects the boundary stability. We consider both
finite-radius solutions and planar traveling waves.

\subsection{Finite-radius scenario}
\label{sec:Finite radius scenario}

Let \(\Omega(t)\subset\mathbb{R}^2\) be a bounded tumor domain with a
sufficiently regular boundary. The free-boundary problem
\eqref{eqn:limitpressure}--\eqref{eqn:invivonutrient} is formulated for
general evolving domains. Recall that, for a patch solution,
the right-hand side of \eqref{eqn:pressure_equation} is
\(c^{\mathrm{(i)}}/c_B\), where the subscript \(\infty\) is omitted as
stated at the end of the previous section. For the finite-radius
stability analysis, we restrict attention to boundaries that can be
represented as radial graphs about the origin:
\begin{align}
\label{eqn: boundary_at_Rtheta}
\Omega(t)
&:=\left\{(r,\theta)\,\middle|\,
0\leq r<R(\theta,t),\ 0\leq\theta<2\pi\right\},\\
\mathcal{B}(t)
&:=\partial\Omega(t)
=\left\{(r,\theta)\,\middle|\,
r=R(\theta,t),\ 0\leq\theta<2\pi\right\},
\end{align}
where \(R(\theta,t)>0\) is \(2\pi\)-periodic in \(\theta\). The
corresponding exterior domain is denoted by
\(\Omega^c(t):=\mathbb{R}^2\setminus\overline{\Omega(t)}\).

\subsubsection{Symmetric solutions}

In the radially symmetric case, the tumor domain is a disk of radius
\(R(t)>0\), and its boundary is the corresponding circle. At each fixed
time, we set \(R=R(t)\) and suppress the time dependence of the domains
and the associated radial solutions. Thus,
\(\Omega_R:=\{(r,\theta)\mid 0\leq r<R,\ 0\leq\theta<2\pi\}\)
and
\(\mathcal{B}_R:=\{(r,\theta)\mid r=R,\ 0\leq\theta<2\pi\}\).
In accordance with the notation above, we set
\(\Omega_R^c:=\mathbb{R}^2\setminus\overline{\Omega_R}\).
The nutrient system \eqref{eqn:invivonutrient} then becomes
\begin{subequations}
\label{eqn: nutrients_radius}
\begin{alignat}{4}
\label{eqn: c_inside_radius}
-\frac{1}{r}\partial_r(r\partial_r c^{\mathrm{(i)}})
+\lambda c^{\mathrm{(i)}}
&=0,
&\quad&\text{in}
&\quad&\Omega_R,\\
\label{eqn: c_outside_radius}
-\frac{1}{r}\partial_r(r\partial_r c^{\mathrm{(o)}})
+c^{\mathrm{(o)}}
&=c_B,
&\quad&\text{in}
&\quad&\Omega_R^c,\\
\label{eqn: boundary_continuity_radius}
c^{\mathrm{(i)}}
&=c^{\mathrm{(o)}},
&\quad&\text{on}
&\quad&\mathcal{B}_R,\\
\label{eqn: boundary_derivative_continuity_radius}
\partial_r c^{\mathrm{(i)}}
&=\partial_r c^{\mathrm{(o)}},
&\quad&\text{on}
&\quad&\mathcal{B}_R.
\end{alignat}
\end{subequations}
The radial nutrient solutions also satisfy
\[
\partial_r c^{\mathrm{(i)}}(0)=0,
\qquad
c^{\mathrm{(o)}}(r)\longrightarrow c_B
\quad\text{as }r\to+\infty.
\]

The pressure equation \eqref{eqn:pressure_equation} and the
boundary condition \eqref{bc:p_zero} reduce to
\begin{subequations}
\label{eqn: pressure_radius}
\begin{alignat}{4}
-\frac{1}{r}\partial_r(r\partial_r p)+\gamma p
&=\frac{c^{\mathrm{(i)}}}{c_B},
&\quad&\text{in}
&\quad&\Omega_R,\\
\label{eqn:pressure_boundary_radius}
p&=0,
&\quad&\text{on}
&\quad&\mathcal{B}_R,
\end{alignat}
\end{subequations}
together with the regularity condition \(\partial_r p(0)=0\).

Under the radial symmetry assumption, the nutrient equations can be
solved explicitly. We denote the resulting radial nutrient
concentrations by $c_0^{\mathrm{(i)}}$ and
$c_0^{\mathrm{(o)}}$:
\begin{subequations}
\label{eqn: nutrient_solution_radius}
\begin{alignat}{2}
c_0^{\mathrm{(i)}}(r)
&=c_Ba_0(R)I_0(\sqrt{\lambda}r),
&\quad&0\leq r\leq R,\\
c_0^{\mathrm{(o)}}(r)
&=c_B\bigl(1+b_0(R)K_0(r)\bigr),
&\quad&r\geq R,
\end{alignat}
\end{subequations}
where
\begin{subequations}
\label{eqn: radial_nutrient_coefficients}
\begin{alignat}{2}
a_0(R)
&=\frac{K_1(R)}
{\sqrt{\lambda}K_0(R)I_0'(\sqrt{\lambda}R)
-K_0'(R)I_0(\sqrt{\lambda}R)}
=\frac{K_1(R)}{\mathcal{D}_0(R)},\\
b_0(R)
&=-\frac{\sqrt{\lambda}I_1(\sqrt{\lambda}R)}
{\sqrt{\lambda}K_0(R)I_0'(\sqrt{\lambda}R)
-K_0'(R)I_0(\sqrt{\lambda}R)}
=-\frac{\sqrt{\lambda}I_1(\sqrt{\lambda}R)}{\mathcal{D}_0(R)}.
\end{alignat}
\end{subequations}
Here \(I_j\) and \(K_j\) denote the modified Bessel functions of the
first and second kinds, respectively, a prime denotes differentiation
with respect to the argument, and
\begin{equation}
\label{eqn: D_i_definition}
\mathcal{D}_i(R)
:=
\sqrt{\lambda}K_i(R)I_i'(\sqrt{\lambda}R)
-K_i'(R)I_i(\sqrt{\lambda}R),
\qquad i=0,1,2,\ldots.
\end{equation}

Substituting \(c_0^{\mathrm{(i)}}\) into
\eqref{eqn: pressure_radius} and solving the resulting boundary value
problem, we obtain the radially symmetric pressure solution \(p_0\):
\begin{equation}
\label{eqn: pressure_solution_radius}
p_0(r)
=\begin{cases}
\displaystyle
\frac{a_0(R)}{\lambda-\gamma}
\left[
\frac{I_0(\sqrt{\lambda}R)}{I_0(\sqrt{\gamma}R)}
I_0(\sqrt{\gamma}r)-I_0(\sqrt{\lambda}r)
\right],
& \lambda\neq\gamma,\\[2ex]
\displaystyle
\frac{a_0(R)}{2\sqrt{\lambda}}
\left[
\frac{R I_1(\sqrt{\lambda}R)}{I_0(\sqrt{\lambda}R)}
I_0(\sqrt{\lambda}r)-rI_1(\sqrt{\lambda}r)
\right],
& \lambda=\gamma,
\end{cases}
\qquad \text{in }\Omega_R.
\end{equation}
The second expression in \eqref{eqn: pressure_solution_radius} is the
limit of the first one as \(\gamma\to\lambda\).
Finally, by the boundary velocity law
\eqref{eqn:boundary_speed}, the unperturbed radius evolves according to
\begin{equation}
\label{eqn: radial_radius_evolution}
\frac{dR}{dt}=-\partial_r p_0(R).
\end{equation}
This completes the construction of the radially symmetric
solution. It will serve as the base state for the perturbation analysis
below.

\subsubsection{Single-mode perturbation}
\label{sec:Single mode perturbation_radial}
In this subsection, we determine the first-order nutrient
and pressure responses to a single Fourier-mode perturbation of the
radially symmetric tumor boundary. The case \(\gamma=0\) reduces to the
model without pressure feedback studied in \cite{feng2023tumor}; its
finite-radius linear stability analysis was carried out there. We therefore
restrict the present subsection to \(\gamma>0\). We first treat the
nonresonant case \(\lambda\neq\gamma\), and then give the corresponding
formula for \(\lambda=\gamma\).

We write a small perturbation of the radial boundary as
\(R_\epsilon(\theta,t)=R(t)+\epsilon(t)\mathcal{P}(\theta)\), where
\(\epsilon(t)\) is its amplitude and \(\mathcal{P}(\theta)\) is a fixed,
sufficiently regular \(2\pi\)-periodic profile. Expanding
\(\mathcal{P}\) into its Fourier series and linearizing about the disk,
the individual Fourier modes decouple. Moreover, by rotational
invariance, the modes \(\cos(l\theta)\) and \(\sin(l\theta)\) have the
same growth rate for each \(l\geq1\). Therefore, after a rotation of the
angular coordinate, it suffices to analyze a single cosine mode. Taking
\(\mathcal{P}(\theta)=\cos(l\theta)\), we denote the corresponding
amplitude by \(\epsilon_l(t)\) and set
\begin{equation}
\label{eqn: single_mode_boundary_radius}
R_{\epsilon,l}(\theta,t)
:=R(t)+\epsilon_l(t)\cos(l\theta),
\qquad l=1,2,\ldots,
\qquad \frac{|\epsilon_l(t)|}{R(t)}\ll1.
\end{equation}
The mode \(l=0\), which changes only the radius of the disk, is already
incorporated into \(R(t)\). The mode \(l=1\) represents an infinitesimal
translation, while genuine shape perturbations correspond to
\(l\geq2\).

We denote the perturbed tumor domain and its boundary by
\begin{align}
\label{eqn: region_perturbed_radius}
\Omega_{\epsilon,l}(t)
&:=\left\{(r,\theta)\,\middle|\,
0\leq r<R_{\epsilon,l}(\theta,t),\ 0\leq\theta<2\pi\right\},\\
\label{eqn: boundary_perturbed_radius}
\mathcal{B}_{\epsilon,l}(t)
&:=\partial\Omega_{\epsilon,l}(t)
=\left\{(r,\theta)\,\middle|\,
r=R_{\epsilon,l}(\theta,t),\ 0\leq\theta<2\pi\right\}.
\end{align}
As above, the corresponding exterior domain is denoted by
\(\Omega_{\epsilon,l}^c(t)\).

At each fixed time, we regard \(R=R(t)\) as a parameter. The nutrient
and pressure fields associated with \(\Omega_{\epsilon,l}(t)\) are expanded
as
\begin{subequations}
\label{eqn:asymptotic_solutions_radius}
\begin{alignat}{1}
c_{\epsilon,l}^{\mathrm{(i)}}(r,\theta,t)
&=c_0^{\mathrm{(i)}}(r)
+\epsilon_l(t)\hat c_{1,l}^{\mathrm{(i)}}(r)\cos(l\theta)
+O(\epsilon_l^2),\\
c_{\epsilon,l}^{\mathrm{(o)}}(r,\theta,t)
&=c_0^{\mathrm{(o)}}(r)
+\epsilon_l(t)\hat c_{1,l}^{\mathrm{(o)}}(r)\cos(l\theta)
+O(\epsilon_l^2),\\
p_{\epsilon,l}(r,\theta,t)
&=p_0(r)
+\epsilon_l(t)\hat p_{1,l}(r)\cos(l\theta)
+O(\epsilon_l^2).
\end{alignat}
\end{subequations}
The zeroth-order terms are the radially symmetric solutions
\(c_0^{\mathrm{(i)}}\), \(c_0^{\mathrm{(o)}}\), and \(p_0\) given in
\eqref{eqn: nutrient_solution_radius} and
\eqref{eqn: pressure_solution_radius}, while the first-order terms
describe the response to the boundary perturbation. Substitution into
the field equations \eqref{eqn:pressure_equation}, \eqref{eqn:c_in}, and
\eqref{eqn:c_out}, followed by collection of the terms of order
\(\epsilon_l\), gives
\begin{subequations}
\label{eqn:first_order_equation_radius}
\begin{alignat}{2}
-\partial_r^2\hat c_{1,l}^{\mathrm{(i)}}
-\frac{1}{r}\partial_r\hat c_{1,l}^{\mathrm{(i)}}
+\left(\frac{l^2}{r^2}+\lambda\right)
\hat c_{1,l}^{\mathrm{(i)}}
&=0,\\
-\partial_r^2\hat c_{1,l}^{\mathrm{(o)}}
-\frac{1}{r}\partial_r\hat c_{1,l}^{\mathrm{(o)}}
+\left(\frac{l^2}{r^2}+1\right)
\hat c_{1,l}^{\mathrm{(o)}}
&=0,\\
-\partial_r^2\hat p_{1,l}
-\frac{1}{r}\partial_r\hat p_{1,l}
+\left(\frac{l^2}{r^2}+\gamma\right)\hat p_{1,l}
&=\frac{\hat c_{1,l}^{\mathrm{(i)}}}{c_B}.
\end{alignat}
\end{subequations}

The outward unit normal to
\(\mathcal{B}_{\epsilon,l}(t)\) satisfies
\(n_{\epsilon,l}=e_r+O(\epsilon_l)\), with its first-order correction in
the tangential \(e_\theta\)-direction, where \(e_r\) and \(e_\theta\)
are the polar unit vectors. Using
\eqref{eqn:asymptotic_solutions_radius}, Taylor expansion of the nutrient
values and normal derivatives on \(\mathcal{B}_{\epsilon,l}(t)\), together
with the transmission conditions
\eqref{bc:c_continuity}--\eqref{bc:flux_continuity} and their zero-order
counterparts \eqref{eqn: boundary_continuity_radius}--
\eqref{eqn: boundary_derivative_continuity_radius}, yields the following
boundary conditions for the first-order nutrient terms. A detailed derivation can
be found in \cite[Section 3.3]{feng2023tumor}.
\begin{subequations}
\label{eqn:first_order_boundary_condition_radius}
\begin{alignat}{2}
\hat c_{1,l}^{\mathrm{(i)}}(R)
&=\hat c_{1,l}^{\mathrm{(o)}}(R),\\
\partial_r^2c_0^{\mathrm{(i)}}(R)
+\partial_r\hat c_{1,l}^{\mathrm{(i)}}(R)
&=\partial_r^2c_0^{\mathrm{(o)}}(R)
+\partial_r\hat c_{1,l}^{\mathrm{(o)}}(R).
\end{alignat}
Applying the same expansion to the pressure boundary
condition \eqref{bc:p_zero}, with \(\partial\Omega(t)\) replaced by
\(\mathcal{B}_{\epsilon,l}(t)\), and using \(p_0(R)=0\), gives
\begin{equation}
\hat p_{1,l}(R)=-\partial_rp_0(R).
\end{equation}
\end{subequations}
The interior nutrient perturbation is bounded at \(r=0\),
while the exterior nutrient perturbation satisfies
\(\hat c_{1,l}^{\mathrm{(o)}}(r)\to0\) as \(r\to+\infty\). The
solutions of the first two equations in
\eqref{eqn:first_order_equation_radius} therefore take the form
\begin{subequations}
\label{eqn:first_order_nutrient_solution_radius}
\begin{alignat}{2}
\hat c_{1,l}^{\mathrm{(i)}}(r)
&=-c_Ba_{1,l}(R)I_l(\sqrt{\lambda}r),\\
\hat c_{1,l}^{\mathrm{(o)}}(r)
&=-c_Bb_{1,l}(R)K_l(r).
\end{alignat}
\end{subequations}
Recalling the definition of \(\mathcal{D}_i(R)\) in
\eqref{eqn: D_i_definition}, the zero-order solutions satisfy
\[
\partial_r^2c_0^{\mathrm{(o)}}(R)
-\partial_r^2c_0^{\mathrm{(i)}}(R)
=-c_B\sqrt{\lambda}
\frac{\mathcal{D}_1(R)}{\mathcal{D}_0(R)}.
\]
Consequently, the first two conditions in
\eqref{eqn:first_order_boundary_condition_radius} yield
\begin{subequations}
\label{eqn:first_order_nutrient_coefficients_radius}
\begin{alignat}{2}
a_{1,l}(R)
&=\frac{\sqrt{\lambda}\mathcal{D}_1(R)K_l(R)}
{\mathcal{D}_0(R)\mathcal{D}_l(R)},\\
b_{1,l}(R)
&=\frac{\sqrt{\lambda}\mathcal{D}_1(R)
I_l(\sqrt{\lambda}R)}
{\mathcal{D}_0(R)\mathcal{D}_l(R)}.
\end{alignat}
\end{subequations}

In the nonresonant case \(\lambda\neq\gamma\), the bounded solution of the
pressure equation in \eqref{eqn:first_order_equation_radius} can be
written as
\begin{equation}
\label{eqn:first_order_pressure_solution_radius}
\hat p_{1,l}(r)
=A_{1,l}(R)I_l(\sqrt{\gamma}r)
+B_{1,l}(R)I_l(\sqrt{\lambda}r),
\end{equation}
where
\begin{subequations}
\label{eqn:first_order_pressure_coefficients_radius}
\begin{alignat}{2}
B_{1,l}(R)
&=\frac{a_{1,l}(R)}{\lambda-\gamma},\\
A_{1,l}(R)
&=-\frac{\partial_rp_0(R)
+B_{1,l}(R)I_l(\sqrt{\lambda}R)}
{I_l(\sqrt{\gamma}R)}.
\end{alignat}
\end{subequations}
Here \(\partial_rp_0(R)\) is determined by
\eqref{eqn: pressure_solution_radius}; explicitly,
\begin{equation}
\label{eqn:base_pressure_derivative_radius}
\partial_rp_0(R)
=\frac{a_0(R)}{\lambda-\gamma}
\left[
\frac{I_0(\sqrt{\lambda}R)}{I_0(\sqrt{\gamma}R)}
\sqrt{\gamma}I_1(\sqrt{\gamma}R)
-\sqrt{\lambda}I_1(\sqrt{\lambda}R)
\right].
\end{equation}

When \(\lambda=\gamma\), the singularity in
\eqref{eqn:first_order_pressure_coefficients_radius} is removable. The
nutrient perturbations and their coefficients remain unchanged, while
taking the limit \(\gamma\to\lambda\) in the nonresonant pressure solution
gives
\begin{equation}
\label{eqn:first_order_pressure_solution_resonant_radius}
\begin{aligned}
\hat p_{1,l}(r)
={}&-\partial_rp_0(R)
\frac{I_l(\sqrt{\lambda}r)}{I_l(\sqrt{\lambda}R)}\\
&+\frac{a_{1,l}(R)}{2\sqrt{\lambda}}
\left[
rI_l'(\sqrt{\lambda}r)
-R I_l'(\sqrt{\lambda}R)
\frac{I_l(\sqrt{\lambda}r)}{I_l(\sqrt{\lambda}R)}
\right],
\end{aligned}
\end{equation}
where
\begin{equation}
\label{eqn:base_pressure_derivative_resonant_radius}
\partial_rp_0(R)
=\frac{a_0(R)R}{2}
\left[
\frac{I_1(\sqrt{\lambda}R)^2}{I_0(\sqrt{\lambda}R)}
-I_0(\sqrt{\lambda}R)
\right].
\end{equation}

This completes the determination of the first-order
correction terms for the coupled system.

\subsubsection{Evolution equation for the boundary perturbation}
\label{sec:perturbation_growth_rate_radius}

Using the first-order pressure obtained above, we next derive the
evolution equation for the boundary perturbation amplitude.
First, observe that the normal velocity of the radial graph
\eqref{eqn: single_mode_boundary_radius} satisfies
\begin{equation}
\label{eqn:normal_velocity_expansion_radius}
V_{\epsilon,l}
=
\frac{\partial_tR_{\epsilon,l}}
{\sqrt{1+R_{\epsilon,l}^{-2}
(\partial_\theta R_{\epsilon,l})^2}}
=
\frac{dR}{dt}
+\frac{d\epsilon_l}{dt}\cos(l\theta)
+O(\epsilon_l^2).
\end{equation}
On the other hand, applying Taylor expansion to evaluate the
pressure gradient in \eqref{eqn:asymptotic_solutions_radius} on the
perturbed boundary \(\mathcal{B}_{\epsilon,l}(t)\), and using
\(n_{\epsilon,l}=e_r+O(\epsilon_l)\), we obtain
\begin{equation}
\label{eqn:normal_pressure_expansion_radius}
\nabla p_{\epsilon,l}\cdot n_{\epsilon,l}
=
\partial_rp_0(R)
+\epsilon_l(t)
\left[
\partial_r^2p_0(R)+\partial_r\hat p_{1,l}(R)
\right]\cos(l\theta)
+O(\epsilon_l^2)
\end{equation}
on \(\mathcal{B}_{\epsilon,l}(t)\). The angular component of the pressure
gradient contributes only at order \(O(\epsilon_l^2)\). Substituting
\eqref{eqn:normal_velocity_expansion_radius} and
\eqref{eqn:normal_pressure_expansion_radius} into the boundary condition
\(V_{\epsilon,l}=-\nabla p_{\epsilon,l}\cdot n_{\epsilon,l}\), and then
using \eqref{eqn: radial_radius_evolution}, yields the following
amplitude equation.
\begin{equation}
\label{eqn:growth_rate_definition_radius}
\epsilon_l^{-1}\frac{d\epsilon_l}{dt}
=\underbrace{-\left[
\partial_r^2p_0(R(t))+\partial_r\hat p_{1,l}(R(t))
\right]}_{=: \mathcal{F}_l(R(t);\lambda,\gamma)}
+O(\epsilon_l).
\end{equation}
Following the terminology of Cristini, Lowengrub, and Nie
\cite{cristini2003nonlinear}, we call \(\mathcal{F}_l\) the boundary
evolution function of the \(l\)-th mode.
Thus, \(\mathcal{F}_l<0\) corresponds to instantaneous decay of the
\(l\)-th mode, whereas \(\mathcal{F}_l>0\) corresponds to instantaneous
growth.

For \(\lambda\neq\gamma\), we substitute the expressions for \(p_0\)
and \(\hat p_{1,l}\) from
\eqref{eqn: pressure_solution_radius} and
\eqref{eqn:first_order_pressure_solution_radius}--
\eqref{eqn:first_order_pressure_coefficients_radius} into
\eqref{eqn:growth_rate_definition_radius} and obtain
\begin{align}
\label{eqn:growth_rate_nonresonant_radius}
\mathcal{F}_l(R;\lambda,\gamma)
={}&
a_0(R)I_0(\sqrt{\lambda}R)
+\left[
\frac{1}{R}
+\sqrt{\gamma}
\frac{I_l'(\sqrt{\gamma}R)}
{I_l(\sqrt{\gamma}R)}
\right]\partial_rp_0(R)
\\
&+
\frac{a_{1,l}(R)I_l(\sqrt{\lambda}R)}
{\lambda-\gamma}
\left[
\sqrt{\gamma}
\frac{I_l'(\sqrt{\gamma}R)}
{I_l(\sqrt{\gamma}R)}
-\sqrt{\lambda}
\frac{I_l'(\sqrt{\lambda}R)}
{I_l(\sqrt{\lambda}R)}
\right].
\nonumber
\end{align}
Together with
\eqref{eqn:first_order_nutrient_coefficients_radius} and
\eqref{eqn:base_pressure_derivative_radius}, formula
\eqref{eqn:growth_rate_nonresonant_radius} is explicit in
\(R,\lambda,\gamma\), and \(l\).

For \(\gamma=\lambda\), a similar substitution of the corresponding expressions in
\eqref{eqn: pressure_solution_radius} and
\eqref{eqn:first_order_pressure_solution_resonant_radius} into
\eqref{eqn:growth_rate_definition_radius} gives the resonant growth rate
\(\mathcal{F}_l(R;\lambda,\lambda)\). One can verify that
\begin{equation}
\label{eqn:growth_rate_resonant_limit_radius}
\mathcal{F}_l(R;\lambda,\lambda)
=
\lim_{\gamma\to\lambda}
\mathcal{F}_l(R;\lambda,\gamma).
\end{equation}

Since \(R(t)\) evolves, \(\mathcal{F}_l\) is an instantaneous rather than
a constant growth rate. At the linearized level,
\begin{equation}
\label{eqn:amplitude_integral_radius}
\epsilon_l(t)
=
\epsilon_l(0)
\exp\left\{
\int_0^t
\mathcal{F}_l(R(s);\lambda,\gamma)\,ds
\right\}.
\end{equation}
Parameterizing the perturbation amplitude by the unperturbed radius
\(R\) and using
\eqref{eqn: radial_radius_evolution},
\eqref{eqn:amplitude_integral_radius} is equivalent to
\begin{equation}
\label{eqn:amplitude_radius_integral_radius}
\epsilon_l(R)
=
\epsilon_l(R_0)
\exp\left\{
\int_{R_0}^{R}
\frac{\mathcal{F}_l(\rho;\lambda,\gamma)}
{-\partial_rp_0(\rho)}\,d\rho
\right\}.
\end{equation}
Here \(R_0:=R(0)\) is the initial radius, and
\(\epsilon_l(R)\) denotes the perturbation amplitude evaluated when the
unperturbed radius reaches \(R\).

Combining \eqref{eqn: radial_radius_evolution} and
\eqref{eqn:amplitude_radius_integral_radius}, we can reconstruct the
linearized boundary evolution and examine its stability, as shown in
Figure \ref{fig:finite_radius_boundary_evolution}. The figure shows two
effects of pressure feedback: it can turn a stable mode into an
unstable one or make an unstable perturbation grow faster.

At \(\gamma=0\), we use \(\mathcal F_l(R;\lambda,0)\) for the boundary
evolution function of the model without pressure feedback, equivalently its
fixed-\(R\) limit as \(\gamma\to0^+\). The computations in
\cite{feng2023tumor} indicate that
\(\mathcal F_l(R;\lambda,0)<0\) for every \(R>0\), \(0<\lambda\leq1\),
and \(l\geq2\). In the left-hand plot, we take \(l=5\), \(\lambda=1\),
\(4\leq R\leq8\), and \(\epsilon_5(R_0)/R_0=0.07\). In this case,
\(\mathcal F_5(R;1,0)<0<\mathcal F_5(R;1,5)\), and the ratios
\(\epsilon_5(8)/\epsilon_5(4)\) are \(0.35\) and \(1.71\) for
\(\gamma=0\) and \(\gamma=5\), respectively. In the right-hand plot, we
take \(l=5\), \(\lambda=100\), \(1.5\leq R\leq4\), and
\(\epsilon_5(R_0)/R_0=0.03\). The ratios
\(\epsilon_5(4)/\epsilon_5(1.5)\) are \(4.11\) and \(7.69\) for
\(\gamma=0\) and \(\gamma=5\), respectively, so the perturbation grows
faster in the latter case.

\begin{center}
\centering
\includegraphics[width=\textwidth]{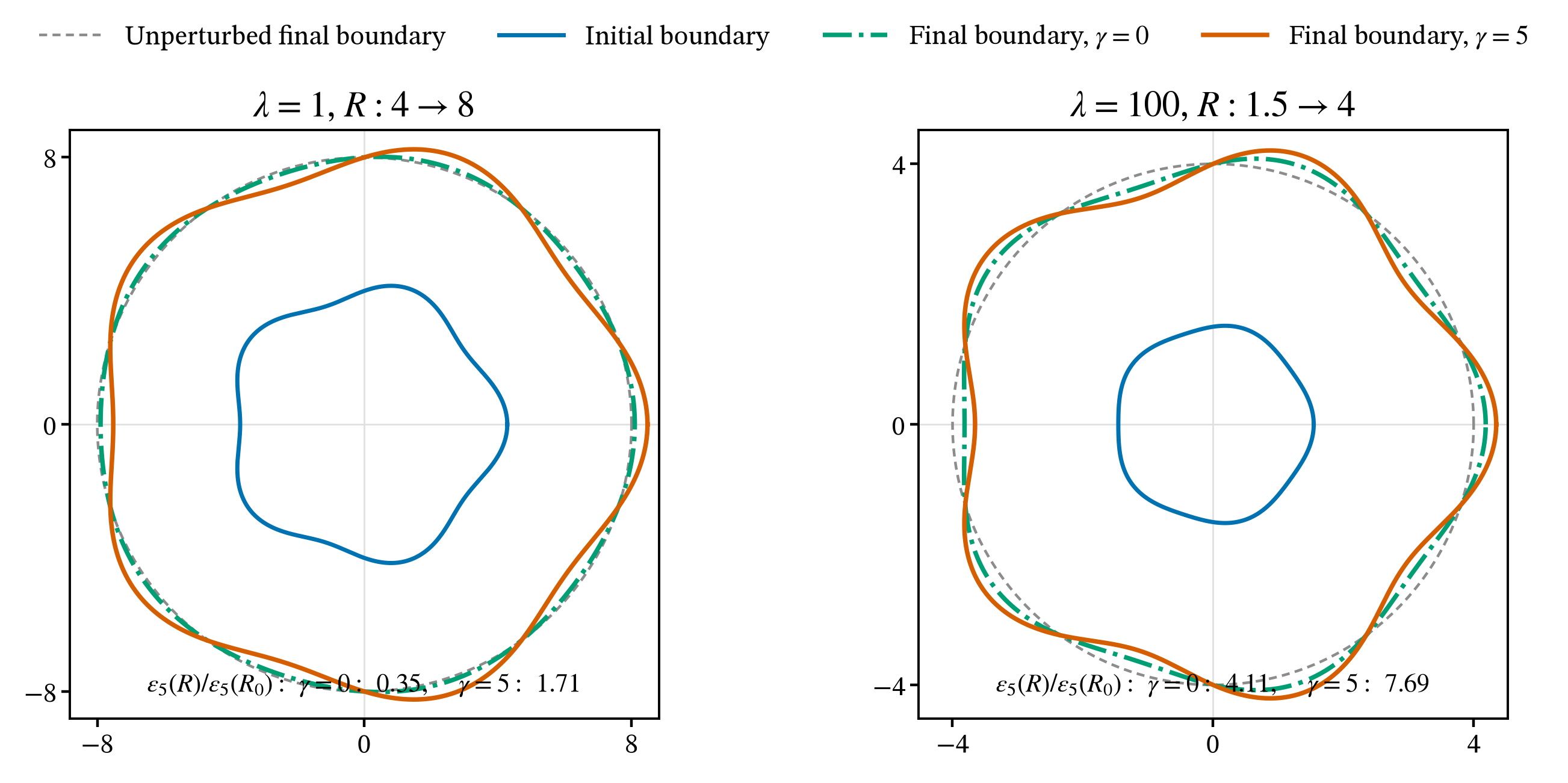}
\captionof{figure}{{Linearized evolution of the \(l=5\) boundary
perturbation. The left-hand plot uses \(\epsilon_5(R_0)/R_0=0.07\),
\(\lambda=1\), \(R_0=4\), and \(R=8\); the right-hand plot uses
\(\epsilon_5(R_0)/R_0=0.03\), \(\lambda=100\), \(R_0=1.5\), and \(R=4\).
The blue curve is the initial boundary in each plot, the dashed gray circle
is the unperturbed final boundary, and the green and orange curves are the
final boundaries for \(\gamma=0\) and \(\gamma=5\), respectively. The
ratios \(\epsilon_5(R)/\epsilon_5(R_0)\) shown in the plots are computed from
\eqref{eqn:amplitude_radius_integral_radius}.}}
\label{fig:finite_radius_boundary_evolution}
\end{center}

\subsubsection{Boundary stability and parameter dependence}
\label{sec:boundary_stability_parameter_radius}

In this subsection, we further characterize the boundary
instability of the radially symmetric solution by establishing several
mathematical properties of \(\mathcal{F}_l\).

\begin{prop}
\label{prop:growth_rate_asymptotics_radius}
For fixed \(\lambda>0\) and \(\gamma>0\), the translation
mode is neutral:
\begin{equation}
\label{eqn:translation_mode_neutral_radius}
\mathcal{F}_1(R;\lambda,\gamma)=0
\qquad\text{for every }R>0.
\end{equation}
For each integer \(l\geq2\),
\begin{subequations}
\label{eqn:growth_rate_asymptotics_radius}
\begin{align}
\lim_{R\to0^+}\mathcal{F}_l(R;\lambda,\gamma)
&=\frac{1-l}{2}<0,
\\
\mathcal{F}_l(R;\lambda,\gamma)
&=
\frac{l^2-1}
{2(\sqrt{\lambda}+1)(\sqrt{\lambda}+\sqrt{\gamma})R^2}
+O(R^{-3})
\qquad\text{as }R\to+\infty.
\end{align}
\end{subequations}
For fixed \(R>0\), the high-frequency behavior is
\begin{equation}
\label{eqn:growth_rate_high_mode_radius}
\mathcal{F}_l(R;\lambda,\gamma)
=
\frac{\partial_rp_0(R)}{R}\,l+O(1)
\qquad\text{as }l\to+\infty.
\end{equation}
Consequently, for every \(l\geq2\), the function
\(\mathcal{F}_l(\,\cdot\,;\lambda,\gamma)\) has at least one zero on
\((0,+\infty)\), while for each fixed \(R>0\), all sufficiently large
modes are stable.
\end{prop}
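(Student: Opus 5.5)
Everything follows from the explicit formula \eqref{eqn:growth_rate_nonresonant_radius}, together with \eqref{eqn:first_order_nutrient_coefficients_radius} and \eqref{eqn:base_pressure_derivative_radius}. I will treat the resonant case $\gamma=\lambda$ at the end by the limit \eqref{eqn:growth_rate_resonant_limit_radius}; the asymptotic expansions below are locally uniform in $\gamma$ near $\lambda$, so they pass to the limit.

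\emph{Translation mode.} For $l=1$ I will avoid brute-force Bessel algebra and argue structurally. A rigid translation $x\mapsto x+\epsilon e_1$ of the radial solution is an exact solution of the free-boundary problem with boundary $R+\epsilon\cos\theta+O(\epsilon^2)$. By uniqueness of the first-order responses, $\hat c^{\mathrm{(i)}}_{1,1}=-\partial_r c_0^{\mathrm{(i)}}$, $\hat c^{\mathrm{(o)}}_{1,1}=-\partial_r c_0^{\mathrm{(o)}}$ and $\hat p_{1,1}=-\partial_r p_0$. These are consistent with the stated formulas: the identities $I_0'=I_1$ and $K_0'=-K_1$ give $\mathcal D_1$-type relations showing $a_{1,1}=\sqrt\lambda a_0$. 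Hence $\partial_r^2p_0(R)+\partial_r\hat p_{1,1}(R)=0$, so $\mathcal F_1\equiv0$. As a check, I will verify directly that $\sqrt\lambda\mathcal D_1K_1/(\mathcal D_0\mathcal D_1)=\sqrt\lambda K_1/\mathcal D_0$.

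\emph{Small $R$.} I will use $I_l(z)\sim (z/2)^l/l!$ and $zI_l'(z)/I_l(z)=l+z^2/(2(l+1))+O(z^4)$, together with $K_l(R)\sim \tfrac12(l-1)!(2/R)^l$ and $\mathcal D_l(R)\sim 1/R$ (Wronskian-type leading term). From these I expect $a_0\to 1$, $\partial_r p_0(R)=-R/2+O(R^3)$, and $a_{1,l}I_l(\sqrt\lambda R)=O(R)$. The third term of \eqref{eqn:growth_rate_nonresonant_radius} involves the difference of the two log-derivatives, which is $O(R)$, so that term is $O(R^2)$. The second term is $(1+l)/R\cdot(-R/2)+O(R^2)$. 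Summing gives $1-(1+l)/2=(1-l)/2$.

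\emph{Large $R$.} I will insert the Hankel expansions $I_\nu(z)=e^z(2\pi z)^{-1/2}(1-(4\nu^2-1)/(8z)+\dots)$ and $K_\nu(z)=e^{-z}(\pi/(2z))^{1/2}(1+(4\nu^2-1)/(8z)+\dots)$. These give $\mathcal D_l\approx(\sqrt\lambda+1)\cdot$(common exponential prefactor), $a_0I_0(\sqrt\lambda R)\to 1/(\sqrt\lambda+1)$, $\partial_rp_0(R)\to -1/((\sqrt\lambda+1)(\sqrt\lambda+\sqrt\gamma))$, and log-derivatives $\sqrt\mu\,I_l'/I_l=\sqrt\mu-1/(2R)+(4l^2-1)/(8\sqrt\mu R^2)+\dots$. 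The $O(1)$ and $O(1/R)$ contributions must cancel; this cancellation is guaranteed by the fact that the $l=1$ expression vanishes identically. Accordingly, I will compute $\mathcal F_l-\mathcal F_1$, which removes all $l$-independent terms and leaves only the $l^2$-dependent $R^{-2}$ coefficients. These come from the $(4l^2-1)$ corrections in the $\gamma$ log-derivative, in the $\lambda$ log-derivative, and in $a_{1,l}$, and should assemble to $(l^2-1)/(2(\sqrt\lambda+1)(\sqrt\lambda+\sqrt\gamma)R^2)$. This bookkeeping, in particular tracking which second-order Hankel terms survive, is the main obstacle. The $\mathcal F_l-\mathcal F_1$ trick is what I will rely on to keep it manageable.

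\emph{Large $l$ and consequences.} For fixed $R$, the uniform behavior $\sqrt\mu\,I_l'(\sqrt\mu R)/I_l(\sqrt\mu R)=l/R+O(1/l)$ makes the second term equal to $(l/R)\partial_rp_0(R)+O(1)$. I will also bound $a_{1,l}I_l(\sqrt\lambda R)$: using $\mathcal D_l\sim K_l I_l\, l/R$, it is $O(1/l)$, so the third term is $O(1)$ because the log-derivative difference is $O(1/l)$. Finally, since $\partial_rp_0(R)<0$ (as $p_0>0$ inside and $p_0=0$ on the boundary, by Hopf), $\mathcal F_l\to-\infty$ as $l\to\infty$. For each $l\ge2$, $\mathcal F_l$ is continuous, negative near $0$, and positive for large $R$, so the intermediate value theorem gives a zero.
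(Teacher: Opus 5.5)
Your proposal is correct and follows the paper's route: you evaluate \eqref{eqn:growth_rate_nonresonant_radius} with small-argument, Hankel, and large-order Bessel asymptotics, handle $\gamma=\lambda$ by local uniformity, and conclude with the intermediate value theorem and $\partial_rp_0(R)<0$. Your rigid-translation identification $\hat p_{1,1}=-\partial_rp_0$ (via $a_{1,1}=\sqrt\lambda\,a_0$) and your $\mathcal F_l-\mathcal F_1$ bookkeeping at large $R$ are valid streamlinings of that same computation, and the latter does give the coefficient $(l^2-1)/\bigl(2(\sqrt\lambda+1)(\sqrt\lambda+\sqrt\gamma)\bigr)$ because only the $(4l^2-1)/(8z^2)$ Hankel corrections survive. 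A few constants are off but harmless: as $R\to0^+$ one has $\mathcal D_l(R)\sim\lambda^{l/2}/R$ (not $1/R$) and $\partial_rp_0(R)=-R/2+O(R^3|\ln R|)$, and as $l\to\infty$ one has $\mathcal D_l\sim 2l\,K_l(R)I_l(\sqrt\lambda R)/R$; none of these affects the orders $a_{1,l}(R)I_l(\sqrt\lambda R)=O(R)$ and $O(l^{-1})$ that you actually use.
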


\begin{proof}
The identity
\eqref{eqn:translation_mode_neutral_radius} follows directly from
\eqref{eqn:growth_rate_nonresonant_radius}, together with
\eqref{eqn:growth_rate_resonant_limit_radius} when
\(\lambda=\gamma\).

For \(l\geq2\), the small-radius limit follows by substituting the
standard expansions of \(I_l\) and \(K_l\) at the origin into
\eqref{eqn:growth_rate_nonresonant_radius}. For the
large-radius limit, we use the following asymptotic expansions for the
Bessel functions for fixed \(l\) and \(\lambda>0\):
\begin{subequations}
\label{eqn:bessel_large_radius_fixed_mode}
\begin{align}
\sqrt{\lambda}\,
\frac{I_l'(\sqrt{\lambda}R)}{I_l(\sqrt{\lambda}R)}
&=
\sqrt{\lambda}-\frac{1}{2R}
+\frac{4l^2-1}{8\sqrt{\lambda}R^2}
+O(R^{-3}),
\\
-\frac{K_l'(R)}{K_l(R)}
&=
1+\frac{1}{2R}
+\frac{4l^2-1}{8R^2}
+O(R^{-3}).
\end{align}
\end{subequations}
The first expansion also holds with \(\lambda\) replaced by
\(\gamma\). Substitution into the definitions of \(a_0\),
\(\mathcal D_l\), and \(a_{1,l}\) in
\eqref{eqn: radial_nutrient_coefficients},
\eqref{eqn: D_i_definition}, and
\eqref{eqn:first_order_nutrient_coefficients_radius}, respectively,
and then into
\eqref{eqn:growth_rate_nonresonant_radius}, gives the second expansion in
\eqref{eqn:growth_rate_asymptotics_radius}. These expansions are locally
uniform for positive \(\gamma\); hence the resonant case follows by
continuity as \(\gamma\to\lambda\). The two radius
asymptotics, together with the continuity in \(R\), imply that
\(\mathcal{F}_l\) has at least one zero for every \(l\geq2\).

We next fix \(R>0\) and consider the limit
\(l\to+\infty\). The standard large-order asymptotics of the modified
Bessel functions, together with
\eqref{eqn:first_order_nutrient_coefficients_radius}, give, for
\(\mu=\lambda,\gamma\),
\[
\sqrt{\mu}\,
\frac{I_l'(\sqrt{\mu}R)}{I_l(\sqrt{\mu}R)}
=\frac{l}{R}+O(l^{-1}),
\qquad
a_{1,l}(R)I_l(\sqrt{\lambda}R)=O(l^{-1}).
\]
These relations yield \eqref{eqn:growth_rate_high_mode_radius}; at
\(\lambda=\gamma\), the same conclusion follows from the resonant formula
\eqref{eqn:first_order_pressure_solution_resonant_radius}. Since
\(\partial_rp_0(R)<0\), sufficiently large modes have negative growth
rate.
\end{proof}

\begin{rem}
Proposition \ref{prop:growth_rate_asymptotics_radius} guarantees at
least one zero of \(\mathcal{F}_l(\,\cdot\,;\lambda,\gamma)\) for each
\(l\geq2\). Numerical evaluation over the parameter ranges considered
here suggests that this zero is in fact unique: the curve crosses the
horizontal axis only once as \(R\) increases, as illustrated in Figure
\ref{fig:finite_radius_growth_rates}. Establishing this uniqueness
analytically is considerably more difficult because of the coupled
Bessel-function structure of \eqref{eqn:growth_rate_nonresonant_radius},
and we do not pursue it here.
\end{rem}

Figure \ref{fig:finite_radius_growth_rates} illustrates the conclusions
of Proposition \ref{prop:growth_rate_asymptotics_radius} in the resonant
case \(\lambda=\gamma=1\).

\begin{center}
\begin{minipage}{\textwidth}
\centering
\includegraphics[width=\textwidth]{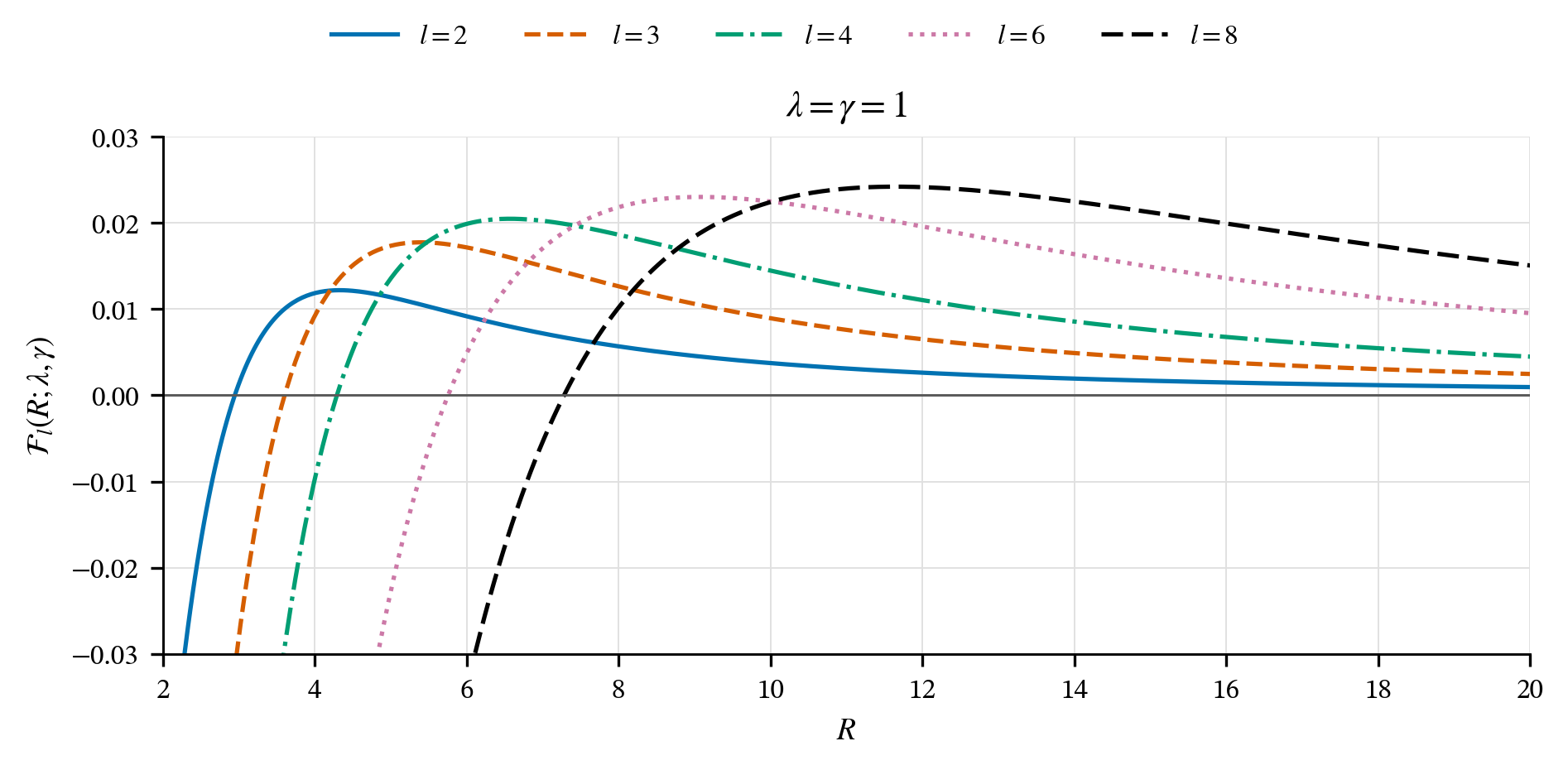}
\captionof{figure}{The growth rate
\(\mathcal{F}_l(R;1,1)\) for \(l=2,3,4,6,8\) in a neighborhood of the
stability thresholds. Each displayed mode crosses the zero level,
reaches a positive maximum, and then decreases toward zero as \(R\)
increases.}
\label{fig:finite_radius_growth_rates}
\end{minipage}
\end{center}

Since Proposition \ref{prop:growth_rate_asymptotics_radius} assumes
\(\gamma>0\), we next compare its large-radius behavior with the
endpoint \(\gamma=0\), where pressure feedback is absent.

\begin{prop}
\label{prop:noncommuting_limits_radius}
Let \(\lambda>0\) and \(l\geq2\). Then the two iterated limits satisfy
\begin{subequations}
\label{eqn:noncommuting_limits_radius}
\begin{align}
\lim_{\gamma\to0^+}\lim_{R\to+\infty}
R^2\mathcal{F}_l(R;\lambda,\gamma)
&=
\frac{l^2-1}
{2\sqrt{\lambda}(\sqrt{\lambda}+1)},
\\
\lim_{R\to+\infty}\lim_{\gamma\to0^+}
R^2\mathcal{F}_l(R;\lambda,\gamma)
&=
\frac{(l^2-1)(\sqrt{\lambda}-1)}
{2\lambda(\sqrt{\lambda}+1)}.
\end{align}
\end{subequations}
\end{prop}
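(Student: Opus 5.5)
The first iterated limit follows from the earlier proposition on the asymptotics of \(\mathcal{F}_l\). For fixed \(\gamma>0\), the large-radius expansion in \eqref{eqn:growth_rate_asymptotics_radius} gives
\[
\lim_{R\to+\infty}R^2\mathcal{F}_l(R;\lambda,\gamma)=\frac{l^2-1}{2(\sqrt{\lambda}+1)(\sqrt{\lambda}+\sqrt{\gamma})}.
\]
This expression is continuous in \(\gamma\), and letting \(\gamma\to0^+\) gives \((l^2-1)/(2\sqrt{\lambda}(\sqrt{\lambda}+1))\). The only issue is whether the \(O(R^{-3})\) remainder is uniform in \(\gamma\). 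It is not needed, because the inner limit is taken first at fixed \(\gamma>0\).

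For the second limit, I would first compute the fixed-\(R\) limit \(\gamma\to0^+\) of \eqref{eqn:growth_rate_nonresonant_radius}. Using \(I_0(z)=1+z^2/4+O(z^4)\), \(I_1(z)=z/2+O(z^3)\) and \(I_l(z)\sim (z/2)^l/l!\), I expect the following limits:
\[
\sqrt{\gamma}\,\frac{I_l'(\sqrt{\gamma}R)}{I_l(\sqrt{\gamma}R)}\to \frac{l}{R},
\qquad
\frac{I_0(\sqrt{\lambda}R)}{I_0(\sqrt{\gamma}R)}\sqrt{\gamma}\,I_1(\sqrt{\gamma}R)\to 0.
\]
Hence \(\partial_rp_0(R)\to -a_0(R)I_1(\sqrt{\lambda}R)/\sqrt{\lambda}\), and
\[
\mathcal F_l(R;\lambda,0)=a_0I_0(\sqrt\lambda R)-\frac{(l+1)a_0I_1(\sqrt\lambda R)}{\sqrt\lambda R}+\frac{a_{1,l}I_l(\sqrt\lambda R)}{\lambda}\Bigl[\frac lR-\sqrt\lambda\frac{I_l'(\sqrt\lambda R)}{I_l(\sqrt\lambda R)}\Bigr].
\]
This should agree with the \(\gamma=0\) formula of \cite{feng2023tumor}, which serves as a consistency check.

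Next I would expand this expression as \(R\to\infty\) up to order \(R^{-2}\), using \eqref{eqn:bessel_large_radius_fixed_mode}. Write \(\alpha_l:=\sqrt\lambda I_l'(\sqrt\lambda R)/I_l(\sqrt\lambda R)\) and \(\beta_l:=-K_l'(R)/K_l(R)\). Then \(\mathcal D_l=K_lI_l(\alpha_l+\beta_l)\) and \(a_0I_0=K_1/(K_0(\alpha_0+\beta_0))\). The ratios \(K_1/K_0=\beta_0\) and \(I_1/I_0=\alpha_0/\sqrt\lambda\) are also expandable. Factoring out \(a_0I_0\), everything reduces to rational expressions in \(\alpha_0,\alpha_1,\alpha_l,\beta_0,\beta_1,\beta_l\), since \(a_{1,l}I_l/(a_0I_0)\) also reduces. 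The \(l\)-independent pieces cancel at orders \(R^0\) and \(R^{-1}\), and the coefficient of \(R^{-2}\) should factor as a multiple of \(l^2-1\). Using \(\mathcal F_1\equiv0\) helps here: it lets me compute \(\mathcal F_l-\mathcal F_1\), in which only the \(l\)-dependent terms survive, so that only the order-\(R^{-2}\) terms with the \((4l^2-1)/8\) corrections need tracking.

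The main obstacle is this second-order bookkeeping. Many terms are \(O(1)\) and \(O(R^{-1})\), and they must cancel exactly. In particular, the term \(a_{1,l}I_l[\,l/R-\alpha_l]/\lambda\) has a bracket of order one (\(\approx-\sqrt\lambda\)), so the \(R^{-2}\) part of \(a_{1,l}I_l\) is needed precisely. This is also the structural source of the discrepancy: for \(\gamma>0\), \(\sqrt\gamma I_l'/I_l\to\sqrt\gamma\) as \(R\to\infty\), whereas at \(\gamma=0\) the pressure Laplacian mode gives \(l/R\to0\). This changes the denominator from \(\lambda-\gamma\) paired with a bracket difference \(\sqrt\gamma-\sqrt\lambda\) (yielding \(1/(\sqrt\lambda+\sqrt\gamma)\)) to a different \(R^{-2}\) balance, producing \((\sqrt\lambda-1)/(2\lambda(\sqrt\lambda+1))\). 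I would carry out the expansion with the \(\mathcal F_l-\mathcal F_1\) trick and verify the constant against the case \(\lambda=1\), where the limit must vanish.
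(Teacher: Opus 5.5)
Your proposal is correct and follows the paper's proof. The first identity comes from the fixed-$\gamma$ large-radius expansion in Proposition~\ref{prop:growth_rate_asymptotics_radius}. The second comes from letting $\gamma\to0^+$ at fixed $R$, using $\sqrt\gamma I_l'(\sqrt\gamma R)/I_l(\sqrt\gamma R)\to l/R$, and then expanding the resulting $\gamma=0$ evolution function to order $R^{-2}$ via \eqref{eqn:bessel_large_radius_fixed_mode}; your reduction to the ratios $\alpha_j,\beta_j$, with $a_0I_0=\beta_0/(\alpha_0+\beta_0)$ and $a_{1,l}I_l=a_0I_0\,\alpha_0(\alpha_1+\beta_1)/(\alpha_l+\beta_l)$, does carry through to exactly $(l^2-1)(\sqrt\lambda-1)/(2\lambda(\sqrt\lambda+1)R^2)$, supplying the bookkeeping the paper leaves implicit.
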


\begin{proof}
For fixed \(\gamma>0\), the large-radius expansion
\eqref{eqn:growth_rate_asymptotics_radius} directly gives the first
identity in \eqref{eqn:noncommuting_limits_radius}. To compute the limits
in the reverse order, we first take \(\gamma\to0^+\) in the exact expression
\eqref{eqn:growth_rate_nonresonant_radius}. For fixed \(R>0\),
\[
\sqrt{\gamma}\,
\frac{I_l'(\sqrt{\gamma}R)}{I_l(\sqrt{\gamma}R)}
\longrightarrow \frac{l}{R}
\qquad\text{as }\gamma\to0^+.
\]
The resulting expression is the boundary evolution function
without pressure feedback derived in \cite{feng2023tumor}. Substituting the large-radius
Bessel expansions \eqref{eqn:bessel_large_radius_fixed_mode} into the
formulas defining \(a_0(R)\), \(a_{1,l}(R)\), and the limiting boundary
evolution function gives
\[
\mathcal{F}_l(R;\lambda,0)
=
\frac{(l^2-1)(\sqrt{\lambda}-1)}
{2\lambda(\sqrt{\lambda}+1)R^2}
+O(R^{-3}).
\]
The second identity in \eqref{eqn:noncommuting_limits_radius} follows.
\end{proof}

\begin{rem}
\label{rem:nonuniform_gamma_radius}
For every \(l\geq2\), the first limit is positive for
all \(\lambda>0\), whereas the sign of the second limit depends on
\(\lambda\). Thus, for any fixed \(\gamma>0\), each mode eventually
becomes unstable as the tumor radius increases. At \(\gamma=0\), by
contrast, the computations in \cite{feng2023tumor} indicate that the
model without pressure feedback is stable for \(0<\lambda\leq1\) and becomes
unstable at large radii only when \(\lambda>1\). Hence, for
\(0<\lambda\leq1\), an arbitrarily small positive \(\gamma\) changes the
eventual large-radius behavior from stability to instability. This
difference shows that the convergence as \(\gamma\to0^+\) is not
uniform over arbitrarily large radii. Thus, even arbitrarily weak
pressure feedback changes the large-radius boundary stability
qualitatively.
\end{rem}

The limits above characterize the eventual behavior as
\(R\to+\infty\). To examine how pressure feedback affects the onset of
instability at finite radii, we define the following threshold for each
\(l\geq2\):
\[
R_l^*(\lambda,\gamma)
:=
\inf\left\{
R>0:\mathcal{F}_l(R;\lambda,\gamma)=0
\right\}
\]
for \(\gamma\geq0\), with the convention
\(R_l^*(\lambda,\gamma)=+\infty\) when no zero exists. Proposition
\ref{prop:growth_rate_asymptotics_radius} guarantees that this threshold
is finite for every \(\gamma>0\). At \(\gamma=0\), however, the
computations in \cite{feng2023tumor} indicate the two regimes described
in Remark \ref{rem:nonuniform_gamma_radius}. For
\(0<\lambda\leq1\), no zero is observed, and we therefore set
\(R_l^*(\lambda,0)=+\infty\). For \(\lambda>1\), a finite threshold is
found. We
therefore choose \(\lambda=1\) and \(\lambda=2\) as representative values
of these two regimes in Figure \ref{fig:critical_radius_pressure_feedback}.
To show both the behavior near \(\gamma=0\) and the larger values of
\(\gamma\), we use a smooth quasi-logarithmic horizontal scale; it is
linear at the origin and approaches a logarithmic scale away from it.

In the left-hand plot, the thresholds become unbounded as
\(\gamma\to0^+\), consistent with the qualitative change described in
Remark \ref{rem:nonuniform_gamma_radius}: positive pressure feedback
creates an instability that is absent at \(\gamma=0\). In the right-hand
plot, the curves instead approach
the finite thresholds in the absence of pressure feedback \(R_l^*(2,0)\).

The plots indicate that, for both representative values of \(\lambda\),
\(R_l^*\) decreases as \(\gamma\) increases and increases with \(l\).
Thus, within the parameter ranges considered here, stronger pressure
feedback causes instability at a smaller radius, while higher-frequency
modes require a larger tumor radius to become unstable.

\begin{figure}[!htbp]
\centering
\includegraphics[width=\textwidth]{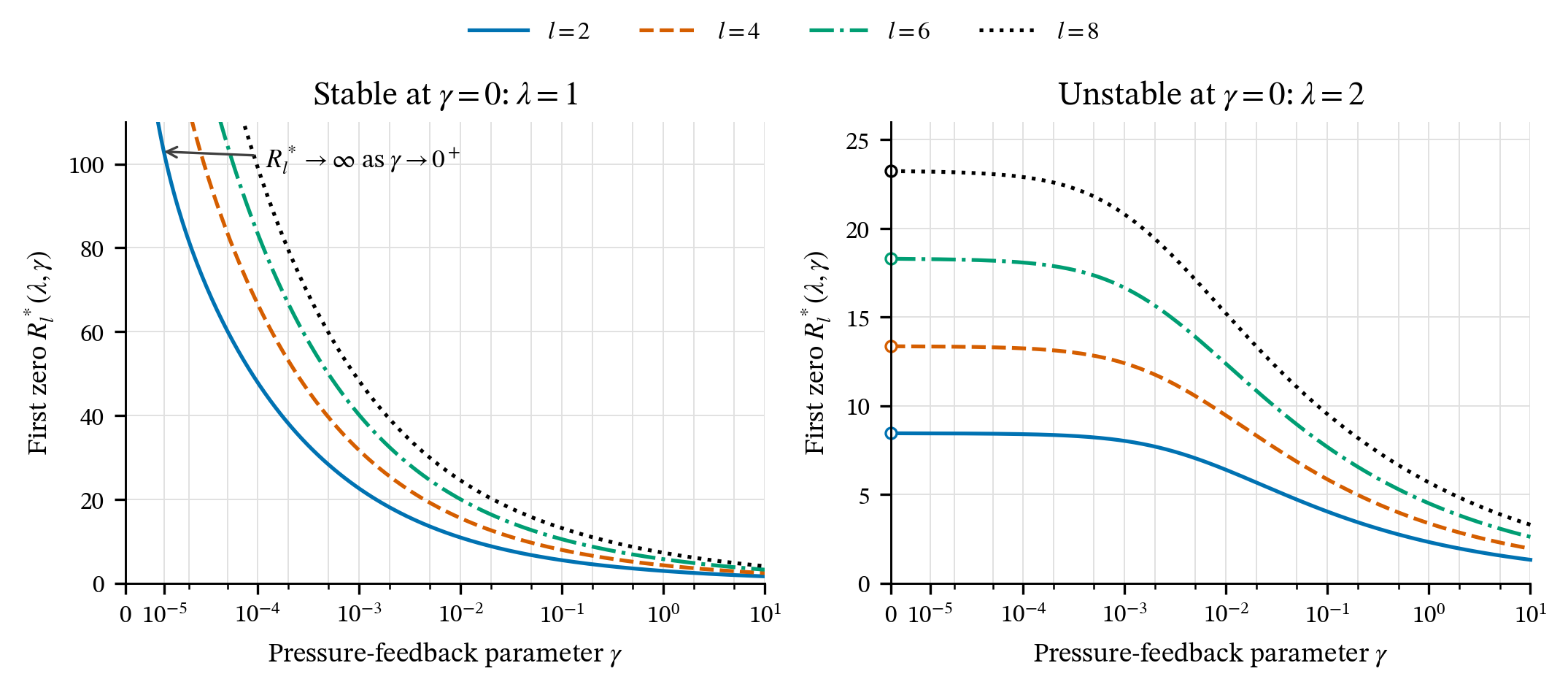}
\caption{The numerically computed threshold
\(R_l^*(\lambda,\gamma)\) for \(l=2,4,6,8\) and positive \(\gamma\) in
the range \(5\times10^{-6}\leq\gamma\leq10\). The left-hand plot uses
\(\lambda=1\),
representing the regime \(0<\lambda\leq1\), in which the thresholds
become unbounded as \(\gamma\to0^+\). The right-hand plot uses
\(\lambda=2\), representing the regime \(\lambda>1\), and also includes
\(\gamma=0\); the open circles on the vertical axis mark the finite
thresholds in the absence of pressure feedback \(R_l^*(2,0)\).}
\label{fig:critical_radius_pressure_feedback}
\end{figure}

We next consider planar traveling fronts, for which the
boundary evolution function has a simpler explicit form.

\subsection{Traveling-wave scenario}
\label{sec:traveling_wave_scenario}

This subsection is devoted to studying the boundary
instability of planar traveling fronts in a periodic tube. The analysis
follows a perturbation framework similar to that used for the
finite-radius scenario.
Let
\begin{equation}
\label{eqn: infty_tube}
\mathbb{T}_{2\pi}:=\mathbb{R}/(2\pi\mathbb{Z}),
\qquad
\mathcal{T}:=\mathbb{R}\times\mathbb{T}_{2\pi},
\end{equation}
so that \(\mathcal T\) is an infinite tube-like domain
with \(2\pi\)-periodic transverse variable \(y\). We work in
a frame translating in the \(x\)-direction with the planar front and,
for notational simplicity, continue to denote the longitudinal
coordinate in this frame by \(x\). For a
\(2\pi\)-periodic even boundary profile \(f\), define the associated
tumor region and its boundary by
\begin{align}
\label{eqn: boundary_at_fy}
\Omega_f
&:=\left\{(x,y)\in\mathcal{T}\,\middle|\,x<f(y)\right\},\\
\mathcal{B}_f
&:=\partial\Omega_f
=\left\{(x,y)\in\mathcal{T}\,\middle|\,x=f(y)\right\}.
\end{align}
The corresponding exterior region is denoted by
\(\Omega_f^c:=\mathcal{T}\setminus\overline{\Omega_f}\).

\subsubsection{Symmetric solutions}

For the planar front, \(f(y)\equiv0\), and hence
\begin{align}
\label{eqn:boundary_at_zero}
\Omega_0
&:=\left\{(x,y)\in\mathcal{T}\,\middle|\,x<0\right\},\\
\mathcal{B}_0
&:=\partial\Omega_0
=\left\{(x,y)\in\mathcal{T}\,\middle|\,x=0\right\}.
\end{align}
The nutrient concentrations depend only on \(x\) and solve
\begin{subequations}
\label{eqn: nutrients_traveling}
\begin{alignat}{4}
\label{eqn: c_inside2}
-\partial_x^2 c^{\mathrm{(i)}}+\lambda c^{\mathrm{(i)}}
&=0,
&\quad&\text{in}
&\quad&\Omega_0,\\
\label{eqn: c_outside2}
-\partial_x^2 c^{\mathrm{(o)}}+c^{\mathrm{(o)}}
&=c_B,
&\quad&\text{in}
&\quad&\Omega_0^c,\\
\label{eqn: boundary_continuity}
c^{\mathrm{(i)}}
&=c^{\mathrm{(o)}},
&\quad&\text{on}
&\quad&\mathcal{B}_0,\\
\label{eqn: boundary_derivative_continuity}
\partial_xc^{\mathrm{(i)}}
&=\partial_xc^{\mathrm{(o)}},
&\quad&\text{on}
&\quad&\mathcal{B}_0.
\end{alignat}
\end{subequations}
The far-field conditions are
\begin{equation}
\label{eqn:nutrient_far_field_traveling}
c^{\mathrm{(i)}}(x)\longrightarrow0
\quad\text{as }x\to-\infty,
\qquad
c^{\mathrm{(o)}}(x)\longrightarrow c_B
\quad\text{as }x\to+\infty.
\end{equation}
Solving this transmission problem gives
\begin{subequations}
\label{eqn: nutrient_solution_0}
\begin{alignat}{2}
\label{eqn:nutrient_solution_0_interior}
c_0^{\mathrm{(i)}}(x)
&=\frac{c_B}{\sqrt{\lambda}+1}e^{\sqrt{\lambda}x},
&\quad&x\leq0,\\
c_0^{\mathrm{(o)}}(x)
&=c_B\left(1-
\frac{\sqrt{\lambda}}{\sqrt{\lambda}+1}e^{-x}\right),
&\quad&x\geq0.
\end{alignat}
\end{subequations}

The corresponding pressure satisfies
\begin{subequations}
\label{eqn: pressure_traveling}
\begin{alignat}{4}
-\partial_x^2p_0+\gamma p_0
&=\frac{c_0^{\mathrm{(i)}}}{c_B},
&\quad&\text{in}
&\quad&\Omega_0,\\
p_0
&=0,
&\quad&\text{on}
&\quad&\mathcal{B}_0,
\end{alignat}
\end{subequations}
together with boundedness as \(x\to-\infty\).
Substituting \eqref{eqn:nutrient_solution_0_interior}
into \eqref{eqn: pressure_traveling} and solving the resulting
boundary-value problem, we obtain
\begin{equation}
\label{eqn: pressure_solution_0}
p_0(x)
=\begin{cases}
\displaystyle
\frac{e^{\sqrt{\gamma}x}-e^{\sqrt{\lambda}x}}
{(\sqrt{\lambda}+1)(\lambda-\gamma)},
& \lambda\neq\gamma,\\[2ex]
\displaystyle
-\frac{x e^{\sqrt{\lambda}x}}
{2\sqrt{\lambda}(\sqrt{\lambda}+1)},
& \lambda=\gamma,
\end{cases}
\qquad x\leq0.
\end{equation}
The second expression is the limit of the first one as
\(\gamma\to\lambda\), and both expressions satisfy \(p_0(x)>0\) for
\(x<0\). By Darcy's law, the speed of the planar front
is given by
\begin{equation}
\label{eqn: symmetric_traveling_speed}
\sigma_0
=-\partial_xp_0(0)
=\frac{1}
{(\sqrt{\lambda}+1)(\sqrt{\lambda}+\sqrt{\gamma})}.
\end{equation}

This completes the construction of the symmetric
traveling-wave solution, which serves as the base state for the
perturbation analysis below.

\subsubsection{Single-mode perturbation}
\label{sec:single_mode_perturbation_traveling}

As in the finite-radius case, it is sufficient to
consider a single cosine-mode perturbation

\begin{equation}
\label{eqn:single_mode_boundary_traveling}
f_{\epsilon,l}(y,t)
:=\epsilon_l(t)\cos(ly),
\qquad
|\epsilon_l(t)|\ll1.
\end{equation}

Here \(l\geq1\) is an integer; the zero mode only
translates the planar front and is therefore excluded. In the notation
of \eqref{eqn: boundary_at_fy}, we set

\[
\Omega_{\epsilon,l}(t)
:=\Omega_{f_{\epsilon,l}(\cdot,t)},
\qquad
\mathcal{B}_{\epsilon,l}(t)
:=\mathcal{B}_{f_{\epsilon,l}(\cdot,t)},
\qquad
\Omega_{\epsilon,l}^c(t)
:=\mathcal T\setminus\overline{\Omega_{\epsilon,l}(t)}.
\]

The associated nutrient and pressure fields are expanded as

\begin{subequations}
\label{eqn:asymt solns}
\begin{alignat}{1}
c_{\epsilon,l}^{\mathrm{(i)}}(x,y,t)
&=c_0^{\mathrm{(i)}}(x)
+\epsilon_l(t)\hat c_{1,l}^{\mathrm{(i)}}(x)\cos(ly)
+O(\epsilon_l^2),
\qquad \text{in }\Omega_{\epsilon,l}(t),\\
c_{\epsilon,l}^{\mathrm{(o)}}(x,y,t)
&=c_0^{\mathrm{(o)}}(x)
+\epsilon_l(t)\hat c_{1,l}^{\mathrm{(o)}}(x)\cos(ly)
+O(\epsilon_l^2),
\qquad \text{in }\Omega_{\epsilon,l}^c(t),\\
p_{\epsilon,l}(x,y,t)
&=p_0(x)
+\epsilon_l(t)\hat p_{1,l}(x)\cos(ly)
+O(\epsilon_l^2),
\qquad \text{in }\Omega_{\epsilon,l}(t).
\end{alignat}
\end{subequations}

Following the linearization leading to
\eqref{eqn:first_order_equation_radius}, we substitute
\eqref{eqn:asymt solns} into the differential equations
in \eqref{eqn:limitpressure} and \eqref{eqn:invivonutrient}.
The balance of the first-order terms yields
\begin{subequations}
\label{eqn: first_order_equation}
\begin{alignat}{1}
-\partial_x^2\hat c_{1,l}^{\mathrm{(i)}}
+(l^2+\lambda)\hat c_{1,l}^{\mathrm{(i)}}
&=0,\\
-\partial_x^2\hat c_{1,l}^{\mathrm{(o)}}
+(l^2+1)\hat c_{1,l}^{\mathrm{(o)}}
&=0,\\
-\partial_x^2\hat p_{1,l}
+(l^2+\gamma)\hat p_{1,l}
&=\frac{\hat c_{1,l}^{\mathrm{(i)}}}{c_B}.
\end{alignat}
\end{subequations}
The interior perturbations decay as \(x\to-\infty\), whereas the
exterior nutrient perturbation decays as \(x\to+\infty\).

The outward unit normal to
\(\mathcal{B}_{\epsilon,l}(t)\) satisfies
\begin{equation}
\label{eqn:normal_expansion_traveling}
n_{\epsilon,l}=e_x+O(\epsilon_l),
\end{equation}
where \(e_x\) denotes the unit vector in the \(x\)-direction.
\begin{samepage}
Using \eqref{eqn:asymt solns} and Taylor expansions
about \(x=0\), we approximate the field values and normal derivatives
on \(\mathcal{B}_{\epsilon,l}(t)\). Substitution into the nutrient
transmission conditions and the pressure boundary condition, together with the
zero-order conditions in \eqref{eqn: nutrients_traveling} and
\eqref{eqn: pressure_traveling}, yields the following boundary
conditions for the first-order terms:
\begin{subequations}
\label{eqn: first_order_boundary_condition}
\begin{alignat}{1}
\hat c_{1,l}^{\mathrm{(i)}}(0)
&=\hat c_{1,l}^{\mathrm{(o)}}(0),\\
\partial_x^2c_0^{\mathrm{(i)}}(0)
+\partial_x\hat c_{1,l}^{\mathrm{(i)}}(0)
&=\partial_x^2c_0^{\mathrm{(o)}}(0)
+\partial_x\hat c_{1,l}^{\mathrm{(o)}}(0),\\
\hat p_{1,l}(0)&=-\partial_xp_0(0)=\sigma_0.
\end{alignat}
\end{subequations}
\end{samepage}

Solving \eqref{eqn: first_order_equation} subject to
\eqref{eqn: first_order_boundary_condition}, we obtain the first-order
correction terms. For the nutrient,
\begin{subequations}
\label{eqn:first_order_nutrient_solution_traveling}
\begin{alignat}{1}
\hat c_{1,l}^{\mathrm{(i)}}(x)
&=-\frac{c_B\sqrt{\lambda}}
{\sqrt{\lambda+l^2}+\sqrt{1+l^2}}
e^{\sqrt{\lambda+l^2}x},
\qquad x\leq0,\\
\hat c_{1,l}^{\mathrm{(o)}}(x)
&=-\frac{c_B\sqrt{\lambda}}
{\sqrt{\lambda+l^2}+\sqrt{1+l^2}}
e^{-\sqrt{1+l^2}x},
\qquad x\geq0.
\end{alignat}
\end{subequations}

For \(x\leq0\) and \(\lambda\neq\gamma\), the bounded
pressure perturbation is
\begin{equation}
\label{eqn:first_order_pressure_solution_traveling}
\hat p_{1,l}(x)
=\frac{\sqrt{\lambda}\left(e^{\sqrt{\lambda+l^2}x}
-e^{\sqrt{\gamma+l^2}x}\right)}
{(\lambda-\gamma)(\sqrt{\lambda+l^2}+\sqrt{1+l^2})}
+\sigma_0e^{\sqrt{\gamma+l^2}x}.
\end{equation}
Setting \(\gamma=0\) in
\eqref{eqn:first_order_pressure_solution_traveling} recovers the result
of \cite{feng2023tumor}.

When \(\lambda=\gamma>0\), the apparent singularity in
\eqref{eqn:first_order_pressure_solution_traveling} is removable, and
for \(x\leq0\) the limiting expression is
\begin{equation}
\label{eqn:first_order_pressure_solution_resonant_traveling}
\hat p_{1,l}(x)
=\left[
\frac{1}{2\sqrt{\lambda}(\sqrt{\lambda}+1)}
+\frac{\sqrt{\lambda}\,x}
{2\sqrt{\lambda+l^2}
(\sqrt{\lambda+l^2}+\sqrt{1+l^2})}
\right]
e^{\sqrt{\lambda+l^2}x}.
\end{equation}

Together with \(p_0\), these first-order corrections
determine the boundary evolution function, whose derivation is
presented in the next subsection.

\subsubsection{Evolution equation and its properties}
\label{sec:perturbation_growth_rate_traveling}

As in the finite-radius case, the evolution equation for
\(\epsilon_l(t)\) is obtained by expanding Darcy's law on the perturbed
boundary to first order.
Since the unperturbed planar front propagates with speed
\(\sigma_0\), the normal velocity of the perturbed front is
\begin{equation}
\label{eqn:normal_velocity_traveling}
V_{\epsilon,l}
=\frac{\sigma_0+\dfrac{d\epsilon_l}{dt}\cos(ly)}
{\sqrt{1+l^2\epsilon_l^2\sin^2(ly)}}
=\sigma_0+\frac{d\epsilon_l}{dt}\cos(ly)
+O(\epsilon_l^2).
\end{equation}
On the other hand, the pressure expansion in
\eqref{eqn:asymt solns} and the normal expansion
\eqref{eqn:normal_expansion_traveling} give
\begin{equation}
\label{eqn:normal_pressure_expansion_traveling}
\nabla p_{\epsilon,l}\cdot n_{\epsilon,l}
=\partial_xp_0(0)
+\epsilon_l(t)
\left[
\partial_x^2p_0(0)+\partial_x\hat p_{1,l}(0)
\right]\cos(ly)
+O(\epsilon_l^2)
\end{equation}
on \(\mathcal{B}_{\epsilon,l}(t)\). Substituting
\eqref{eqn:normal_velocity_traveling} and
\eqref{eqn:normal_pressure_expansion_traveling} into the boundary
condition \(V_{\epsilon,l}=-\nabla p_{\epsilon,l}\cdot n_{\epsilon,l}\),
and using \(\sigma_0=-\partial_xp_0(0)\), yields
\begin{equation}
\label{eqn:amplitude_evolution_traveling}
\epsilon_l^{-1}\frac{d\epsilon_l}{dt}
=\underbrace{-\left[
\partial_x^2p_0(0)+\partial_x\hat p_{1,l}(0)
\right]}_{=:~\mathcal{E}_l(\lambda,\gamma)}
+O(\epsilon_l).
\end{equation}
Following the terminology used in the finite-radius
case, we call \(\mathcal{E}_l\) the boundary evolution function of the
\(l\)-th mode.
As before, \(\mathcal{E}_l>0\) indicates boundary
instability in the \(l\)-th mode, whereas \(\mathcal{E}_l<0\) indicates
stability with respect to that mode.

For \(\lambda\neq\gamma\), using the expressions for
\(p_0\) and \(\hat p_{1,l}\) in \eqref{eqn: pressure_solution_0} and
\eqref{eqn:first_order_pressure_solution_traveling}, respectively, a
direct calculation gives
\begin{equation}
\label{eqn: derivation_of_boundary}
\mathcal{E}_l(\lambda,\gamma)
=\frac{1}{\sqrt{\lambda}+1}
\left(1-\frac{\sqrt{\gamma+l^2}}
{\sqrt{\gamma}+\sqrt{\lambda}}\right)
-\frac{\sqrt{\lambda}}
{(\sqrt{\gamma+l^2}+\sqrt{\lambda+l^2})
(\sqrt{\lambda+l^2}+\sqrt{1+l^2})}.
\end{equation}
Formula \eqref{eqn: derivation_of_boundary} is regular at
\(\lambda=\gamma\) and agrees with the value obtained directly from
\eqref{eqn:first_order_pressure_solution_resonant_traveling}.

We next establish several mathematical properties of
the boundary evolution function \(\mathcal E_l\) and use them to characterize the
qualitative dependence of boundary instability on the mode number and
the model parameters. When considering the limit \(l\to0^+\), we
regard \(l\) as a positive real variable. Unlike in the
finite-radius case, \(l=1\) is a genuine shape perturbation of the
planar front; the admissible modes in the \(2\pi\)-periodic tube are the
integers \(l\geq1\).

\begin{prop}
\label{prop: derivation_boundary_evolution}
Let \(\lambda>0\) and \(\gamma>0\). The zero mode is neutral,
\begin{equation}
\label{eqn:zero_mode_neutral_traveling}
\mathcal{E}_0(\lambda,\gamma)=0.
\end{equation}
Moreover,
\begin{subequations}
\label{eqn:growth_rate_asymptotics_traveling}
\begin{align}
\mathcal{E}_l(\lambda,\gamma)
&=\frac{l^2}
{2(\sqrt{\lambda}+1)(\sqrt{\lambda}+\sqrt{\gamma})}
+O(l^4)
\qquad\text{as }l\to0^+,
\label{eqn:growth_rate_small_mode_traveling}\\
\mathcal{E}_l(\lambda,\gamma)
&=-\frac{l}
{(\sqrt{\lambda}+1)(\sqrt{\lambda}+\sqrt{\gamma})}
+\frac{1}{\sqrt{\lambda}+1}
+O(l^{-1})
\qquad\text{as }l\to+\infty.
\label{eqn:growth_rate_large_mode_traveling}
\end{align}
\end{subequations}
Consequently, sufficiently high-frequency modes are stable, and the
continuous modal growth-rate curve has a unique positive zero
\(l_*(\lambda,\gamma)>0\). More precisely,
\begin{equation}
\label{eqn:continuous_dispersion_sign_traveling}
\mathcal E_l(\lambda,\gamma)>0
\quad\text{for }0<l<l_*(\lambda,\gamma),
\qquad
\mathcal E_l(\lambda,\gamma)<0
\quad\text{for }l>l_*(\lambda,\gamma).
\end{equation}

For a fixed integer \(l\geq1\), let \(\lambda_0^l>l^2\) denote the
threshold for the model without pressure feedback, characterized by
\begin{equation}
\label{eqn:lambda_threshold_traveling}
\mathcal{E}_l(\lambda_0^l,0)=0,
\end{equation}
as established in \cite{feng2025nonsymmetric}. If
\(0<\lambda<\lambda_0^l\), then there exists a unique
\(\gamma_l^*(\lambda)>0\) such that
\begin{equation}
\label{eqn:gamma_threshold_traveling}
\mathcal{E}_l(\lambda,\gamma_l^*(\lambda))=0.
\end{equation}
Furthermore,
\begin{equation}
\label{eqn:gamma_threshold_sign_traveling}
\mathcal{E}_l(\lambda,\gamma)<0
\quad\text{for }0<\gamma<\gamma_l^*(\lambda),
\qquad
\mathcal{E}_l(\lambda,\gamma)>0
\quad\text{for }\gamma>\gamma_l^*(\lambda),
\end{equation}
and the crossing is transversal:
\begin{equation}
\label{eqn:gamma_transversality_traveling}
\partial_\gamma\mathcal{E}_l
(\lambda,\gamma_l^*(\lambda))>0.
\end{equation}
\end{prop}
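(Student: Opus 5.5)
The plan is to work directly with the closed form \eqref{eqn: derivation_of_boundary}. Throughout I abbreviate
\[
a=\sqrt{\lambda},\quad s=\sqrt{\gamma},\quad G=\sqrt{\gamma+l^2},\quad L=\sqrt{\lambda+l^2},\quad M=\sqrt{1+l^2},
\]
so that
\[
\mathcal E_l=\frac{1}{a+1}\Bigl(1-\frac{G}{s+a}\Bigr)-\frac{a}{(G+L)(L+M)},
\]
with \(l\) treated as a positive real variable.

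\textbf{Asymptotics in \(l\).} At \(l=0\) we have \(G=s\), \(L=a\), \(M=1\). Both terms then equal \(a/((a+1)(s+a))\) and cancel, which gives \eqref{eqn:zero_mode_neutral_traveling}. Every quantity is a smooth function of \(t=l^2\) near \(t=0\), because \(s,a>0\), so \(\mathcal E\) has an expansion in powers of \(l^2\).

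For the coefficient of \(l^2\):
\begin{itemize}
\item the first term contributes \(-1/(2s(s+a)(a+1))\);
\item for the second term, the logarithmic derivative of \((G+L)(L+M)\) in \(t\) at \(t=0\) is \(1/(2sa)+1/(2a)\), so it contributes \((1/s+1)/(2(s+a)(a+1))\).
\end{itemize}
The \(1/s\) parts cancel, leaving \eqref{eqn:growth_rate_small_mode_traveling}. For \(l\to\infty\) I use \(G=l+O(l^{-1})\) in the first term. In the second term, \((G+L)(L+M)\sim 4l^2\), so it is \(O(l^{-2})\). Together these give \eqref{eqn:growth_rate_large_mode_traveling}. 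Hence \(\mathcal E_l>0\) for small \(l>0\) and \(\mathcal E_l<0\) for large \(l\), and the intermediate value theorem gives a positive zero.

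\textbf{Uniqueness in \(l\) (main obstacle).} Neither term of \(\mathcal E\) is monotone in a convenient direction, so uniqueness of the zero in \(l\) is the step I expect to be hardest. I will first rewrite the first term using \(s+a-G=a-l^2/(s+G)\). Then I will try to show that \(\frac{d}{dt}\mathcal E<0\) at every zero of \(\mathcal E\) for \(t=l^2>0\), using the relation \(\mathcal E=0\) to eliminate one of the two terms in the derivative. This forces all crossings to be downward, hence there is exactly one, which gives \eqref{eqn:continuous_dispersion_sign_traveling}.

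If that sign argument gets messy, the fallback is to clear denominators in \(\mathcal E=0\). Using \(G^2-s^2=L^2-a^2=l^2\) and \(M^2-1=l^2\), I would reduce it to a polynomial-type relation in \(G,L,M\), then show the resulting function of \(t\) is concave, or otherwise single-crossing, on \((0,\infty)\).

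\textbf{The \(\gamma\)-threshold.} Fix an integer \(l\ge1\) and \(0<\lambda<\lambda_0^l\).
\begin{itemize}
\item As \(\gamma\to0^+\), \(\mathcal E_l(\lambda,\gamma)\to\mathcal E_l(\lambda,0)\) by continuity. This limit is negative, taking from \cite{feng2025nonsymmetric} that \(\lambda_0^l\) is the unique sign change of \(\mathcal E_l(\cdot,0)\), with negativity below it.
\item As \(\gamma\to\infty\), \(s+a-G\to a\), so
\[
\mathcal E_l\sim \frac{a}{s}\Bigl(\frac{1}{a+1}-\frac{1}{L+M}\Bigr)>0,
\]
which is positive because \(L+M>a+1\) for \(l\ge1\).
\end{itemize}
The intermediate value theorem therefore yields a zero \(\gamma_l^*(\lambda)\).

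For uniqueness, the sign pattern \eqref{eqn:gamma_threshold_sign_traveling} and transversality \eqref{eqn:gamma_transversality_traveling}, I will prove that \(\partial_\gamma\mathcal E_l>0\) at every zero. The two pieces of the derivative are:
\begin{itemize}
\item the first term contributes \((l^2-sa)/(2sG(s+a)^2(a+1))\);
\item the second term contributes \(a/(2G(G+L)^2(L+M))>0\).
\end{itemize}
At a zero I substitute \(a/((G+L)(L+M))=(s+a-G)/((a+1)(s+a))\) into the second piece. This leaves a common positive factor times
\[
\frac{l^2-sa}{s(s+a)}+\frac{s+a-G}{G+L}.
\]
I then aim to show this is positive using \(G<s+l^2/(2s)\) and \(L>a\). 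The case \(sa\ge l^2\) is the delicate one and may need the zero relation once more. Positivity at every zero rules out a second crossing, completing the statement.
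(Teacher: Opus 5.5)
\textbf{There is a genuine gap: uniqueness of the positive zero in $l$, i.e.\ \eqref{eqn:continuous_dispersion_sign_traveling}.} You flag this step yourself, and your primary plan cannot close it as stated. Set $T_2=a/((G+L)(L+M))$. Then $\partial_t\mathcal E=-\frac{1}{2G(a+1)(s+a)}+T_2\bigl(\frac{1}{2GL}+\frac{1}{2LM}\bigr)$. Eliminating either term with $\mathcal E=0$ gives
\[
\partial_t\mathcal E=\frac{(s+a-G)(G+M)-LM}{2(a+1)(s+a)GLM}.
\]
So you would need $(s+a-G)(G+M)<LM$ at every zero. This is not a pointwise inequality: at $t=0$ it reads $a(1+s)<a$.

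With $Q(t):=(s+a-G)(G+L)(L+M)$ one has $\mathcal E=(Q(t)-Q(0))/\bigl((a+1)(s+a)(G+L)(L+M)\bigr)$, and the required inequality is exactly $Q'(t)<0$. What you actually need is the global fact that $Q$ can return to its initial value only on a decreasing stretch. Local algebra at a zero does not supply this. Your fallback (``concave or single-crossing'') does not say which function is concave or by what estimate.

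The missing ingredient, used in the paper, is strict log-concavity of $Q$ on $[0,z_c)$ with $z_c=a^2+2as$. With $D=s+a-G$,
\[
(\log Q)''=\frac{a+s-2G}{4G^3D^2}-\frac{G^2+L^2}{4G^3L^3}-\frac{L^2+M^2}{4L^3M^3}.
\]
When the first term is positive, the auxiliary inequality $L(a+s-2G)<D^2$ bounds it by $1/(4G^3L)$, which is smaller than the absolute value of the second term. Combined with $(\log Q)'(0)=1/(2a)>0$ and $Q\to0$ as $t\to z_c^-$, this gives exactly one root of $Q=Q(0)$ in $(0,z_c)$. For $t\ge z_c$, $D\le0$ makes $\mathcal E<0$.

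\textbf{The $\gamma$-threshold.} Your route (positivity of $\partial_\gamma\mathcal E_l$ at every zero) differs from the paper's. The paper substitutes $u=\sqrt\gamma+\sqrt{\gamma+l^2}$, factors $\mathcal E_l$ through $H(u)-C$, and proves $H'>0$ via a quartic. Your route does close and avoids the quartic, but not with $L>a$, which bounds $(s+a-G)/(G+L)$ from the wrong side. Use the tangent-line bounds $G<s+l^2/(2s)$ and $L<a+l^2/(2a)$. For $l^2<sa$ the numerator bound $a-l^2/(2s)$ stays positive, and these give
\[
\frac{s+a-G}{G+L}>\frac{a(2sa-l^2)}{(s+a)(2sa+l^2)}=\frac{sa-l^2}{s(s+a)}+\frac{l^4}{s(s+a)(2sa+l^2)}.
\]
So this case needs no second use of the zero relation. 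For $l^2\ge sa$, positivity is immediate because $s+a-G>0$ at a zero. Note that your bracket vanishes to second order in $l^2$ as $l\to0$, so any bound that is not exact to first order in $l^2$ would fail.
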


\begin{proof}
The identity \eqref{eqn:zero_mode_neutral_traveling}
follows by setting \(l=0\) in \eqref{eqn: derivation_of_boundary}.
For \(l\to0^+\), Taylor expansion of each square root at \(l=0\)
directly gives \eqref{eqn:growth_rate_small_mode_traveling}. For
\(l\to+\infty\), we use
\begin{equation*}
\sqrt{l^2+\nu}=l+\frac{\nu}{2l}+O(l^{-3}),
\qquad \nu>0.
\end{equation*}
Consequently, the first term on the right-hand side of
\eqref{eqn: derivation_of_boundary} equals
\begin{equation*}
-\frac{l}{(\sqrt{\lambda}+1)(\sqrt{\lambda}+\sqrt{\gamma})}
+\frac{1}{\sqrt{\lambda}+1}+O(l^{-1}),
\end{equation*}
whereas the second term is \(O(l^{-2})\). This yields
\eqref{eqn:growth_rate_large_mode_traveling}.

We next prove the uniqueness of the positive zero of the
continuous modal growth-rate curve. The case \(\gamma=0\) was established in
\cite{feng2025nonsymmetric}, so it remains to consider \(\gamma>0\).
Set
\[
a=\sqrt{\lambda},\qquad b=\sqrt{\gamma},\qquad z=l^2,
\]
and define
\[
A(z)=\sqrt{a^2+z},\qquad
B(z)=\sqrt{b^2+z},\qquad
S(z)=\sqrt{1+z},\qquad
D(z)=a+b-B(z).
\]
For \(0\leq z<z_c:=a^2+2ab\), let
\[
Q(z):=D(z)(B(z)+A(z))(A(z)+S(z)).
\]
Multiplying \eqref{eqn: derivation_of_boundary} by its positive
denominator shows that
\begin{equation}
\label{eqn:continuous_growth_sign_transform_traveling}
\operatorname{sgn}\mathcal E_{\sqrt{z}}(\lambda,\gamma)
=\operatorname{sgn}\bigl(Q(z)-Q(0)\bigr),
\qquad 0\leq z<z_c,
\end{equation}
where \(Q(0)=a(a+b)(a+1)\). We claim that \(Q\) is strictly
log-concave on \([0,z_c)\). Direct differentiation gives
\begin{align}
\frac{d^2}{dz^2}\log Q(z)
={}&\frac{a+b-2B}{4B^3D^2}
-\frac{B^2+A^2}{4B^3A^3}
-\frac{A^2+S^2}{4A^3S^3},
\label{eqn:log_concavity_continuous_growth_traveling}
\end{align}
where the argument \(z\) has been suppressed on the right-hand side.
If \(a+b-2B\leq0\), the right-hand side is negative. If
\(a+b-2B>0\), then \(a>b\) and
\begin{equation}
\label{eqn:log_concavity_auxiliary_traveling}
A(a+b-2B)<D^2.
\end{equation}
To see this, set
\[
w=\frac{a+b-2B}{a+b}\in(0,1),
\qquad
\delta=\frac{a-b}{a+b}\in(0,1).
\]
Since \(A^2-B^2=(a-b)(a+b)\), inequality
\eqref{eqn:log_concavity_auxiliary_traveling} is equivalent to
\[
w\sqrt{\frac{(1-w)^2}{4}+\delta}
<\frac{(1+w)^2}{4}.
\]
This follows from \(\delta<1\) and
\[
(1+w)^4-4w^2\bigl((1-w)^2+4\bigr)
=(1-w)^2(-3w^2+6w+1)>0.
\]
Consequently, the first positive term in
\eqref{eqn:log_concavity_continuous_growth_traveling} is strictly
smaller than \(1/(4B^3A)\), which is in turn strictly smaller than the
absolute value of the second term. This proves the claim.

Moreover,
\[
\left.\frac{d}{dz}\log Q(z)\right|_{z=0}
=\frac{1}{2a}>0,
\qquad
\lim_{z\to z_c^-}Q(z)=0.
\]
Strict log-concavity therefore implies that \(Q\) first increases and
then decreases, and that the equation \(Q(z)=Q(0)\) has exactly one
solution in \((0,z_c)\). For \(z\geq z_c\), one has \(D(z)\leq0\),
and the numerator obtained in
\eqref{eqn:continuous_growth_sign_transform_traveling} is strictly
negative. This proves \eqref{eqn:continuous_dispersion_sign_traveling}.

For fixed \(l>0\), formula \eqref{eqn: derivation_of_boundary} gives
\begin{equation}
\label{eqn:large_gamma_asymptotic_traveling}
\mathcal{E}_l(\lambda,\gamma)
=\frac{\sqrt{\lambda}}{\sqrt{\lambda}+1}
\left(
1-\frac{\sqrt{\lambda}+1}
{\sqrt{\lambda+l^2}+\sqrt{1+l^2}}
\right)\frac{1}{\sqrt{\gamma}}
+O(\gamma^{-1})
\end{equation}
as \(\gamma\to+\infty\). The coefficient is positive for every
\(l>0\). Since
\(\mathcal{E}_l(\lambda,0)<0\) when
\(0<\lambda<\lambda_0^l\), continuity gives at least one positive zero.

It remains to prove uniqueness with respect to \(\gamma\).
We continue to write \(a=\sqrt{\lambda}\) and
\(b=\sqrt{\gamma}\), and introduce the additional auxiliary variables
\[
q=\sqrt{\lambda+l^2},
\quad k=\sqrt{\gamma+l^2},
\quad u=b+k,
\]
and define
\[
C:=\frac{a(a+1)}{q+\sqrt{1+l^2}},
\qquad
H(u):=
\frac{(au-l^2)(u^2+2qu+l^2)}
{u(u^2+2au-l^2)}.
\]
Then \(u\) is strictly increasing from \(l\) to \(+\infty\) as
\(\gamma\) increases from zero to infinity. A direct rearrangement of
\eqref{eqn: derivation_of_boundary} gives
\begin{equation}
\label{eqn:growth_rate_sign_transform_traveling}
\mathcal{E}_l(\lambda,\gamma)
=\frac{2u}
{(a+1)(u^2+2qu+l^2)}\bigl(H(u)-C\bigr).
\end{equation}
To check the monotonicity of \(H\), let \(e=q-a>0\), so that
\(l^2=e(e+2a)\). Differentiation gives
\[
H'(u)
=\frac{e^2P(u)}
{u^2(u^2+2au-l^2)^2},
\]
where
\begin{align*}
P(u)={}&u^4+4(e+2a)u^3
+(2e+4a)(2e+5a)u^2\\
&+4a(e+2a)^2u-e(e+2a)^3.
\end{align*}
The polynomial \(P\) is strictly increasing for \(u>0\), and
\[
P(l)
=l^4+4(q+a)l^3
+(q+a)(3q+5a)l^2
+4a(q+a)^2l>0.
\]
Hence \(H'(u)>0\) for \(u\geq l\). Moreover,
\(H(l)<C\) is equivalent to
\(\mathcal{E}_l(\lambda,0)<0\), whereas
\[
\lim_{u\to+\infty}H(u)=a>C,
\]
because \(q+\sqrt{1+l^2}>a+1\). Thus \(H(u)=C\) has exactly one
solution. Equation \eqref{eqn:growth_rate_sign_transform_traveling}
gives \eqref{eqn:gamma_threshold_sign_traveling}, and the strict
monotonicity of \(H\), together with the strict monotonicity of \(u\)
in \(\gamma\), gives the transversality condition
\eqref{eqn:gamma_transversality_traveling}.
\end{proof}

Figure \ref{fig:traveling_wave_growth_rate} illustrates these conclusions
for \(\lambda=0.5\). Its left-hand plot displays the unique-zero and sign
properties in \eqref{eqn:continuous_dispersion_sign_traveling}: at
\(\gamma=10\), the continuous modal growth-rate curve crosses the horizontal
axis only once, at \(l\approx1.86\). Among the admissible integer modes
shown, only \(l=1\) is unstable. For \(l=2\), the pressure-feedback
threshold is \(\gamma_2^*(0.5)\approx13.12\). The growth rate is not
globally increasing in \(\gamma\): after crossing zero, it reaches a
positive maximum near \(\gamma=60.68\) and then decreases toward zero
from above, consistently with
\eqref{eqn:large_gamma_asymptotic_traveling}.

\begin{center}
\includegraphics[width=\textwidth]{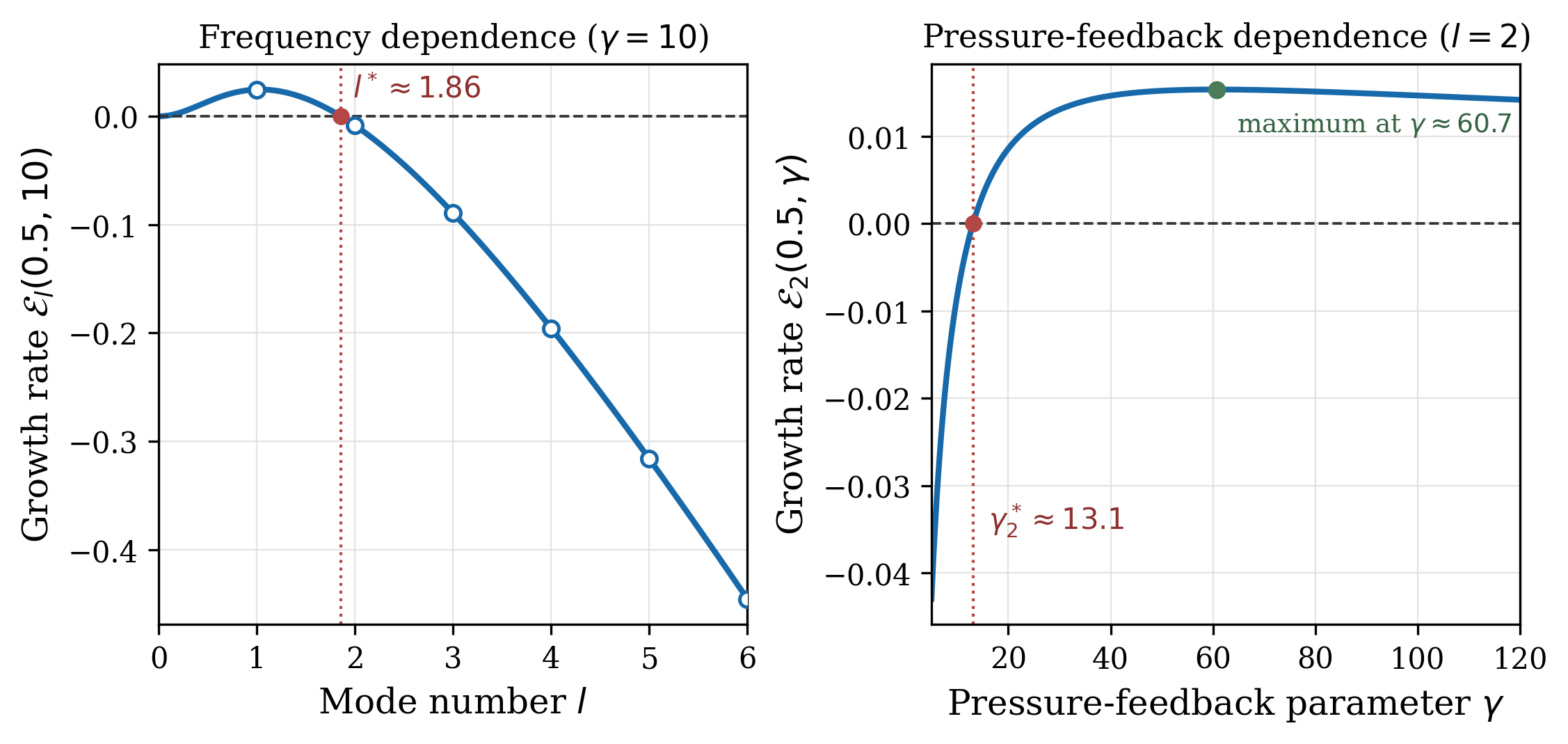}
\captionof{figure}{The traveling-wave growth rate for
\(\lambda=0.5\). Left: \(\mathcal{E}_l(0.5,10)\) for
\(0\leq l\leq6\), illustrating its unique positive zero; the open
circles mark the admissible integer modes.
Right: \(\mathcal{E}_2(0.5,\gamma)\) for
\(5\leq\gamma\leq120\), showing the unique stability threshold and
the subsequent nonmonotone dependence on \(\gamma\).}
\label{fig:traveling_wave_growth_rate}
\end{center}

Proposition
\ref{prop: derivation_boundary_evolution} makes the destabilizing effect
of pressure feedback more precise. When \(\gamma=0\) and
\(0<\lambda<\lambda_0^l\), the \(l\)-th mode is stable (see the
left-hand plot of Figure 1 in \cite{feng2025nonsymmetric}). After
pressure feedback is introduced, the same mode becomes unstable once
\(\gamma>\gamma_l^*(\lambda)\), even though \(\lambda\) remains below
the threshold in the absence of pressure feedback \(\lambda_0^l\).

\begin{rem}\leavevmode
\begin{enumerate}[(1)]
\item
The boundary-instability results obtained here share a qualitative conclusion
with Ye and Lin \cite{ye2024fingering}: pressure-dependent inhibition of cell
growth does not by itself guarantee stability of an advancing interface. The
underlying mechanisms are nevertheless different.
Their model couples the cell phase to an exterior passive fluid, and both the
friction contrast and the Laplace--Young condition enter the modal growth-rate
formula; in particular, surface tension stabilizes sufficiently
high-frequency boundary perturbations. In the present model, \(p=0\) on the
tumor boundary and no curvature term is imposed, so high-frequency stability
instead follows from the interior nutrient--pressure response and the induced
Darcy velocity.

\item
At \(\gamma=\gamma_l^*(\lambda)\), the \(l\)-th mode is
neutral at linear order: the boundary perturbation \(\cos(ly)\) neither
grows nor decays. This suggests that suitable higher-order corrections
to this boundary profile and to the corresponding nutrient and pressure
fields may produce a steadily propagating nonplanar solution of the full
nonlinear problem. In the next section, we verify this intuition and
establish the existence of nonsymmetric traveling waves near
\(\gamma_l^*(\lambda)\).
\end{enumerate}
\end{rem}

\section{Existence of nonsymmetric traveling wave solutions}
\label{sec:traveling_wave_bifurcation}

{
{Motivated by the preceding linear analysis, in this
section we prove the existence of nonsymmetric traveling waves by a
local bifurcation argument. More precisely, for each integer \(l\geq1\)
and \(0<\lambda<\lambda_0^l\), we show that the critical value
\(\gamma_l^*(\lambda)\) identified in Proposition
\ref{prop: derivation_boundary_evolution} is a bifurcation point on the
branch of planar traveling waves and gives rise to a local branch of
nonsymmetric traveling-wave solutions.}

{To establish this bifurcation result, we first need to
justify the asymptotic expansions in
\eqref{eqn:asymt solns} in appropriate function spaces. Although the
required arguments are standard, they involve
lengthy technical estimates. To keep the main argument clear, we
present the essential steps and omit some technical details. The
linearization and the conditions required by the bifurcation theorem
are verified explicitly.}
}

\subsection{A nonlinear functional map}
\label{sec:nonlinear_functional_traveling}

{
{Let \(f\) be a \(2\pi\)-periodic even boundary profile.
As in \eqref{eqn: boundary_at_fy}, let \(\Omega_f\) and
\(\mathcal B_f\) denote the corresponding tumor region and boundary,
and define the exterior region by
\(\Omega_f^c:=\mathcal T\setminus\overline{\Omega_f}\). Unlike in
\cite{feng2025nonsymmetric}, we regard \(\gamma\) as the bifurcation
parameter and fix \(\lambda>0\) throughout this section.}

{For each \(\gamma>0\), we freeze the interface at
\(\mathcal B_f\) and denote by \(c_f^{\mathrm{(i)}}\),
\(c_f^{\mathrm{(o)}}\), and \(p_f^{\mathrm{(i)}}\) the corresponding
nutrient and interior pressure fields, with their dependence on
\(\lambda\) and \(\gamma\) suppressed in the notation. They solve the
nutrient transmission problem \eqref{eqn:invivonutrient} in
\(\Omega_f\) and \(\Omega_f^c\), and the elliptic pressure problem
\eqref{eqn:pressure_equation}--\eqref{bc:p_zero} in \(\Omega_f\),
together with the far-field conditions used in Section
\ref{sec:traveling_wave_scenario}. For a general profile \(f\), these
fields should be understood as the instantaneous solutions associated
with the prescribed interface. The resulting Darcy velocity need not
agree with a single propagation speed along \(\mathcal B_f\); hence
\(f\) need not propagate with a fixed shape.}

{For a general profile \(f\) with \(O(1)\) amplitude,
finding a traveling wave that preserves its boundary shape is difficult.
We therefore seek profiles with \(O(\epsilon)\) amplitude near the
planar front that propagate without changing shape. In the phase space
of boundary profiles and the bifurcation parameter \(\gamma\), these
small-amplitude shape-preserving waves correspond to a local
bifurcation near the branch of planar
traveling-wave solutions. To prove the existence of such a local
bifurcation, motivated by the calculation in
\eqref{eqn:normal_velocity_traveling} and related bifurcation studies
\cite{borisovich2005symmetry,feng2025nonsymmetric}, we consider the
following nonlinear functional map:}
}
\begin{equation}
\label{eqn: nonlinear_functional_map}
F(f,\gamma)(y)
:=-\partial_xp_f^{\mathrm{(i)}}(f(y),y)
-\sigma_0(\gamma).
\end{equation}
{
{Here \(\sigma_0(\gamma)\) denotes the propagation speed
of the planar traveling wave given in
\eqref{eqn: symmetric_traveling_speed}, viewed as a function of
\(\gamma\) with \(\lambda\) fixed. Then the planar fronts form the
trivial branch}
}
\begin{equation}
\label{eqn:trivial_branch_traveling}
F(0,\gamma)=0,
\qquad \gamma>0,
\end{equation}
{
{whereas, after excluding the translational mode as in
the next subsection, nonsymmetric traveling waves correspond to the
nontrivial solutions}
}
\begin{equation}
\label{eqn: functional_equation}
F(f,\gamma)=0,
\qquad f(y)\not\equiv0.
\end{equation}

{
{The Crandall--Rabinowitz theorem (see Theorem
\ref{thm: crandall-robinowitz} below) provides a natural way to obtain
such a local branch. The first step is to determine the
Fr\'{e}chet derivative of \(F\) with respect to the boundary profile,
which is the subject of the next subsection.}}

\subsection{The Fr\'{e}chet derivative of the functional map}
\label{sec:frechet_derivative_traveling}

For \(0<\alpha<1\) and an integer \(m\geq0\), define the classical
H\"older space
\begin{equation}
\label{eqn:function_spaces_traveling}
X^{m+\alpha}(\mathbb R)
:=\left\{f(y)\in C^{m+\alpha}(\mathbb R):
f(y)\text{ is }2\pi\text{-periodic and even}\right\}.
\end{equation}
{
We denote by \(X_1^{m+\alpha}(\mathbb R)\) the closure in
\(X^{m+\alpha}(\mathbb R)\) of the linear space spanned by the cosine
modes \(\cos(jy)\), \(j\geq1\). Thus, the constant mode, which
represents a uniform translation of the planar front, is excluded from
the perturbation space.  {Since we seek a local
bifurcation from the planar solution \(f(y)\equiv0\), we write}
\begin{equation*}
{
f(y)=\epsilon\widetilde f(y),
\qquad |\epsilon|\ll1,}
\end{equation*}
{where \(\widetilde f\in
X_1^{3+\alpha}(\mathbb R)\) is fixed independently of \(\epsilon\).
Without loss of generality, we normalize it by}
\[
\|\widetilde f\|_{X_1^{3+\alpha}(\mathbb R)}=1.
\]
{Indeed, any fixed nonzero amplitude can be absorbed
into \(\epsilon\).}
}
Locally near the planar branch, {after projection onto
the nonconstant cosine modes,} the map in
\eqref{eqn: nonlinear_functional_map} is regarded as
\[
F:X_1^{3+\alpha}(\mathbb R)\times(0,\infty)
\longrightarrow X_1^{2+\alpha}(\mathbb R).
\]

{
{As in Lemma 3.2 and Lemma 3.3 of
\cite{feng2025nonsymmetric}, the first-order expansions of \(c\) and
\(p\) in \eqref{eqn:asymt solns}, used in Section
\ref{sec:single_mode_perturbation_traveling} and extended here to
general profiles by Fourier superposition, can be justified
rigorously in the H\"older spaces defined above by straightening the
perturbed boundary and applying standard Schauder estimates to the
resulting fixed-domain elliptic problems. Using these rigorous
expansions, one can verify that there exists a constant \(C>0\),
locally uniform in \(\gamma\) and independent of \(\epsilon\) and of
the choice of normalized profile \(\widetilde f\), such that}
}
\begin{equation}
\label{eqn:functional_frechet_remainder_traveling}
\Big\|F(\epsilon\widetilde f,\gamma)-F(0,\gamma)
-\epsilon\bigl(
-\widetilde f\,\partial_x^2p_0(0)
-\partial_xp_1(0,\cdot;\widetilde f)
\bigr)\Big\|_{X_1^{2+\alpha}(\mathbb R)}
\leq C\epsilon^2.
\end{equation}
{
{Here \(p_0\) is the planar pressure defined in
\eqref{eqn: pressure_solution_0}, and
\(p_1(x,y;\widetilde f)\) denotes the first-order interior pressure
correction generated by the profile \(\widetilde f\). In particular,
if \(\widetilde f(y)=\cos(ly)\), then
\(p_1(x,y;\widetilde f)=\hat p_{1,l}(x)\cos(ly)\).}

{Since the expression in parentheses is bounded and
linear in \(\widetilde f\), and the constant in
\eqref{eqn:functional_frechet_remainder_traveling} is uniform over
normalized profiles, estimate
\eqref{eqn:functional_frechet_remainder_traveling} gives the
Fr\'{e}chet derivative of \(F\) with respect to \(f\) at
\((0,\gamma)\).}  We characterize it in the following proposition.
}

\begin{prop}
\label{prop: frechet_derivative}
For fixed \(\lambda>0\) and \(\gamma>0\), the Fr\'{e}chet derivative
of \(F\) with respect to \(f\) at \((0,\gamma)\) is the bounded
linear operator, {denoted by \(F_f(0,\gamma)\),} from
\(X_1^{3+\alpha}(\mathbb R)\) to
\(X_1^{2+\alpha}(\mathbb R)\) given by
\begin{equation}
\label{eqn: Frechet_derivative}
\left[F_f(0,\gamma)\right]\widetilde f
=-\widetilde f\,\partial_x^2p_0(0)
-\partial_xp_1(0,\cdot;\widetilde f).
\end{equation}
{
For \(\widetilde f\in X_1^{3+\alpha}(\mathbb R)\), write
}
\begin{equation}
\label{eqn:fourier_profile_frechet_traveling}
\widetilde f(y)
=\sum_{l=1}^{\infty}a_l\cos(ly),
\end{equation}
then
\begin{equation}
\label{eqn: eigenvalue_problem}
\left[F_f(0,\gamma)\right]\widetilde f
=\sum_{l=1}^{\infty}
a_l\mathcal E_l(\lambda,\gamma)\cos(ly),
\end{equation}
where \(\mathcal E_l(\lambda,\gamma)\) is given by
\eqref{eqn: derivation_of_boundary}.
\end{prop}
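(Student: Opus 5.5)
The argument has two parts. The first identifies the Fréchet derivative from the remainder estimate \eqref{eqn:functional_frechet_remainder_traveling}. The second diagonalizes it mode by mode, using the single-mode computations of Section~\ref{sec:traveling_wave_scenario} and linearity.

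\emph{Step 1: the derivative.} Define $L\widetilde f := -\widetilde f\,\partial_x^2p_0(0)-\partial_xp_1(0,\cdot;\widetilde f)$. This map is linear in $\widetilde f$, because $p_1(\cdot;\widetilde f)$ solves the linear problem \eqref{eqn: first_order_equation}--\eqref{eqn: first_order_boundary_condition} with data depending linearly on $\widetilde f$. It is also bounded from $X_1^{3+\alpha}$ to $X_1^{2+\alpha}$. This follows from the Schauder estimates that the paper assumes (straightened-boundary argument as in \cite{feng2025nonsymmetric}), together with the fact that $\partial_x^2p_0(0)$ is a constant.

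The estimate \eqref{eqn:functional_frechet_remainder_traveling} holds uniformly over normalized $\widetilde f$. Writing a general increment as $h=\epsilon\widetilde f$ with $\epsilon=\|h\|$, it gives $\|F(h,\gamma)-F(0,\gamma)-Lh\|\le C\|h\|^2=o(\|h\|)$. Hence $F_f(0,\gamma)=L$, which is \eqref{eqn: Frechet_derivative}.

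Two small checks are needed. First, the output is even and $2\pi$-periodic. Second, its constant mode vanishes, or is removed by the projection the paper uses in defining $F$ onto $X_1^{2+\alpha}$. Indeed, if $\widetilde f$ has no zero mode, then neither the nutrient nor the pressure correction has one.

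\emph{Step 2: diagonalization.} Take first $\widetilde f=\cos(ly)$. By construction $p_1(x,y;\cos(l\cdot))=\hat p_{1,l}(x)\cos(ly)$, with $\hat p_{1,l}$ given by \eqref{eqn:first_order_pressure_solution_traveling}, or by \eqref{eqn:first_order_pressure_solution_resonant_traveling} when $\lambda=\gamma$. Therefore
\[
L\cos(l\cdot)=-\bigl[\partial_x^2p_0(0)+\partial_x\hat p_{1,l}(0)\bigr]\cos(ly)=\mathcal E_l(\lambda,\gamma)\cos(ly),
\]
by the definition in \eqref{eqn:amplitude_evolution_traveling} and the computation \eqref{eqn: derivation_of_boundary}. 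In the resonant case one uses instead the regularity of \eqref{eqn: derivation_of_boundary} at $\lambda=\gamma$.

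Linearity then gives \eqref{eqn: eigenvalue_problem} for finite cosine sums. For a general $\widetilde f=\sum a_l\cos(ly)$, the map $\widetilde f\mapsto p_1(\cdot;\widetilde f)$ is defined directly by the linear boundary-value problem. Its Fourier coefficients in $y$ therefore satisfy the same ODEs \eqref{eqn: first_order_equation} with data $a_l$ times the mode-$l$ data. By uniqueness of bounded or decaying solutions, the $l$-th coefficient of $p_1$ is $a_l\hat p_{1,l}(x)$.

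\emph{Main obstacle: convergence of the series.} The $l$-th Fourier coefficient of $L\widetilde f$ is $a_l\mathcal E_l$, so \eqref{eqn: eigenvalue_problem} holds as an identity of Fourier series. It converges in $L^2$, since $\mathcal E_l=O(l)$ by \eqref{eqn:growth_rate_large_mode_traveling} and $\sum l^2|a_l|^2<\infty$ for $\widetilde f\in C^{3+\alpha}$. Moreover, $L\widetilde f\in C^{2+\alpha}$ by the boundedness shown in Step 1. If one wants convergence in the Hölder norm itself, one interprets the series as the Fourier expansion of an element of $X_1^{2+\alpha}$, or notes that the finite-sum truncations converge in $X_1^{2+\alpha}$ because $L$ is bounded and finite sums approximate $\widetilde f$ in $X_1^{3+\alpha}$ (in the closure sense defining $X_1$).
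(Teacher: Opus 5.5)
Your proposal is correct and follows essentially the same route as the paper: the uniform remainder estimate \eqref{eqn:functional_frechet_remainder_traveling} identifies $F_f(0,\gamma)$. Fourier superposition, together with $p_1(x,y;\cos(l\cdot))=\hat p_{1,l}(x)\cos(ly)$ and the definition of $\mathcal E_l$ in \eqref{eqn:amplitude_evolution_traveling}, then gives the diagonal form. One small caution: reading \eqref{eqn: eigenvalue_problem} as an identity of Fourier coefficients, as you do first, is the right interpretation, but your alternative claim that the Fourier truncations converge in $X_1^{2+\alpha}$ does not follow from the closure definition, because the approximating trigonometric polynomials there need not be the partial Fourier sums.
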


{
{This Fourier representation connects the bifurcation problem
to the properties of \(\mathcal E_l\) established in Proposition
\ref{prop: derivation_boundary_evolution}.}
}

\subsection{Application of the Crandall--Rabinowitz theorem}
\label{sec:crandall_rabinowitz_traveling}

To establish the existence of a local bifurcation of
the nonlinear map \eqref{eqn: nonlinear_functional_map} from the
trivial branch \eqref{eqn:trivial_branch_traveling}, we verify the
hypotheses of the celebrated Crandall--Rabinowitz theorem using the
Fr\'{e}chet derivative obtained in the previous subsection
\cite{crandall1971bifurcation}. For the reader's convenience, we recall
the theorem below.

\begin{thm}[Crandall--Rabinowitz]
\label{thm: crandall-robinowitz}
Let \(W\) and \(Z\) be real Banach spaces, and let
\(\mathcal F:W\times\mathbb R\to Z\) be a \(C^k\) mapping,
\(k\geq3\), in a neighborhood of \((0,\mu_0)\). Assume that
\begin{enumerate}
\item \(\mathcal F(0,\mu)=0\) for \(\mu\) near \(\mu_0\);
\item
\(\ker \mathcal F_w(0,\mu_0)=\operatorname{span}\{w_0\}\);
\item \(\operatorname{Range}\mathcal F_w(0,\mu_0)\) is closed and has
codimension one in \(Z\);
\item
\(\mathcal F_{w\mu}(0,\mu_0)w_0\notin
\operatorname{Range}\mathcal F_w(0,\mu_0)\).
\end{enumerate}
Then the nontrivial solutions of \(\mathcal F(w,\mu)=0\) near
\((0,\mu_0)\), together with the bifurcation point, form a
\(C^{k-2}\) curve \((w(\epsilon),\mu(\epsilon))\) satisfying
\[
w(\epsilon)=\epsilon w_0+o(\epsilon),
\qquad
\mu(0)=\mu_0.
\]
Locally, every solution lies either on this curve or on the trivial
branch \((0,\mu)\).
\end{thm}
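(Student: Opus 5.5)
The statement above is the classical Crandall--Rabinowitz theorem \cite{crandall1971bifurcation}. I would prove it by a Lyapunov--Schmidt reduction followed by the implicit function theorem, applied after dividing out the trivial branch.

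\emph{Step 1: splittings.} Since \(\ker\mathcal F_w(0,\mu_0)=\operatorname{span}\{w_0\}\) is one-dimensional, it has a closed complement, so I write \(W=\operatorname{span}\{w_0\}\oplus W_1\) with \(W_1\) closed. By hypothesis (3), \(Y:=\operatorname{Range}\mathcal F_w(0,\mu_0)\) is closed of codimension one. Set \(z_0:=\mathcal F_{w\mu}(0,\mu_0)w_0\). Hypothesis (4) then gives \(Z=Y\oplus\operatorname{span}\{z_0\}\). Let \(Q:Z\to Y\) be the associated bounded projection. Finally, \(L:=\mathcal F_w(0,\mu_0)|_{W_1}:W_1\to Y\) is a continuous bijection, so it is an isomorphism by the open mapping theorem.

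\emph{Step 2: reduction to a one-dimensional kernel coordinate.} Write \(w=sw_0+v\) with \(v\in W_1\). The equation \(Q\mathcal F(sw_0+v,\mu)=0\) has derivative \(L\) in \(v\) at \((0,0,\mu_0)\). The implicit function theorem therefore gives a \(C^k\) map \(v=\varphi(s,\mu)\) describing all its small solutions. Because \(\mathcal F(0,\mu)=0\), uniqueness forces \(\varphi(0,\mu)=0\). Hence
\[
\varphi(s,\mu)=s\,\psi(s,\mu),\qquad \psi(s,\mu)=\int_0^1\partial_s\varphi(ts,\mu)\,dt,
\]
with \(\psi\in C^{k-1}\). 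Every solution near \((0,\mu_0)\) thus has the form \(w=s(w_0+\psi(s,\mu))\), and \(s=0\) gives exactly the trivial branch.

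\emph{Step 3: the desingularized map.} For the nontrivial solutions I would set
\[
\mathcal G(s,\mu):=\int_0^1\mathcal F_w\bigl(ts(w_0+\psi(s,\mu)),\mu\bigr)\bigl(w_0+\psi(s,\mu)\bigr)\,dt .
\]
This map is \(C^{k-2}\), and for \(s\neq0\) it equals \(s^{-1}\mathcal F(w,\mu)\). Its scalar component \(g(s,\mu):=(I-Q)\mathcal G(s,\mu)\) satisfies \(g(0,\mu_0)=0\). Differentiating \(Q\mathcal F=0\) shows \(\partial_\mu\psi(0,\mu_0)=0\). With this, \(\partial_\mu g(0,\mu_0)\) is the \(z_0\)-component of \(\mathcal F_{w\mu}(0,\mu_0)w_0=z_0\), which is nonzero. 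The scalar implicit function theorem then yields a unique \(C^{k-2}\) function \(\mu(s)\) with \(\mu(0)=\mu_0\) solving \(g(s,\mu(s))=0\).

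Setting \(w(s)=s\bigl(w_0+\psi(s,\mu(s))\bigr)\) gives the required curve. Since \(\psi(0,\mu_0)=0\), this satisfies \(w(s)=sw_0+o(s)\). Local uniqueness follows from the exhaustiveness established in Steps 2--3.

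\emph{Main obstacle.} The delicate point is the bookkeeping of Step 3. One must show that \(\partial_\mu\psi(0,\mu_0)=0\), so that the transversality in hypothesis (4) is not spoiled by the \(W_1\)-correction. One must also control the loss of one derivative from each division by \(s\), which is where \(k\geq3\) is used. I would handle the first point by differentiating \(Q\mathcal F(s(w_0+\psi),\mu)=0\) twice at \(s=0\), using \(\mathcal F_w(0,\mu_0)w_0=0\) and the invertibility of \(L\).
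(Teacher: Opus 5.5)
The paper gives no proof of this statement; it recalls the theorem from \cite{crandall1971bifurcation} and uses it as a black box in Theorem~\ref{thm:traveling_wave_bifurcation}. Your Lyapunov--Schmidt argument is correct, but it is organized differently from the original proof.

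Crandall and Rabinowitz apply the implicit function theorem only once. They use the desingularized map $G(s,\mu,v):=s^{-1}\mathcal F(s(w_0+v),\mu)$ for $s\neq0$, extended by $G(0,\mu,v):=\mathcal F_w(0,\mu)(w_0+v)$, and solve $G=0$ for the joint unknown $(\mu,v)\in\mathbb R\times W_1$ as a function of $s$. Its partial derivative at $(0,\mu_0,0)$ is $(\tau,h)\mapsto\tau z_0+\mathcal F_w(0,\mu_0)h$. This is an isomorphism of $\mathbb R\times W_1$ onto $Z$ exactly by hypotheses (2)--(4).

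That route needs neither the projection $Q$ nor the factorization $\varphi=s\psi$. It also works under minimal smoothness: continuity of $\mathcal F_w$, $\mathcal F_\mu$, $\mathcal F_{w\mu}$ already yields a continuous curve. Your two-stage route costs a little more bookkeeping. In exchange, it isolates the scalar bifurcation equation $g(s,\mu)=0$ explicitly. That equation is the natural starting point for computing $\mu'(0)$ and $\mu''(0)$, i.e.\ for the pitchfork question raised in the paper's final remark.

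Two remarks on your own accounting of the difficulties.

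First, the identity $\partial_\mu\psi(0,\mu_0)=0$ is true for your choice of complement. It follows from one $s$-derivative and one $\mu$-derivative of $Q\mathcal F=0$ (not two $s$-derivatives), using $Qz_0=0$. However, it is not needed. Indeed, $\partial_\mu\mathcal G(0,\mu_0)=z_0+\mathcal F_w(0,\mu_0)\partial_\mu\psi(0,\mu_0)$, and the second term lies in $Y$, so $I-Q$ kills it regardless.

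Second, only one derivative is lost. You have $\psi\in C^{k-1}$, and your integral formula for $\mathcal G$ involves only $\mathcal F_w\in C^{k-1}$ and $\psi$. So $\mathcal G$ is $C^{k-1}$, not merely $C^{k-2}$. Hence your argument gives a $C^{k-1}$ curve and needs only $k\geq2$; the weaker $C^{k-2}$ conclusion stated here follows a fortiori.
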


We now apply this theorem to the nonlinear map \(F\)
defined in \eqref{eqn: nonlinear_functional_map}. The Fourier
representation in Proposition \ref{prop: frechet_derivative} and the
properties of \(\mathcal E_l\) established in Proposition
\ref{prop: derivation_boundary_evolution} allow us to verify its
hypotheses and thereby obtain a sequence of local bifurcation branches
indexed by the integers \(l\) satisfying
\(0<\lambda<\lambda_0^l\), where the threshold in the absence of pressure feedback
\(\lambda_0^l\) is defined by
\eqref{eqn:lambda_threshold_traveling}. As explained above, each branch
yields a family of nonsymmetric traveling-wave solutions of
\eqref{eqn:limitpressure}--\eqref{eqn:invivonutrient}. We summarize and
prove this conclusion in the following theorem.

\begin{thm}
\label{thm:traveling_wave_bifurcation}
For each positive integer \(l\) and
\(0<\lambda<\lambda_0^l\), let \(\gamma_l^*(\lambda)\) denote the
unique positive value defined by
\eqref{eqn:gamma_threshold_traveling}. Then
\((0,\gamma_l^*(\lambda))\) is a bifurcation point of
\eqref{eqn: functional_equation}. More precisely, there exist
\(\epsilon_0>0\) and a local curve of solutions
\[
(-\epsilon_0,\epsilon_0)\ni\epsilon
\longmapsto(f_l(\epsilon),\gamma(\epsilon))
\]
such that
\begin{equation}
\label{eqn:bifurcating_branch_traveling}
F(f_l(\epsilon),\gamma(\epsilon))=0,
\qquad
f_l(\epsilon)(y)=\epsilon\cos(ly)+o(\epsilon)
\quad\text{in }X_1^{3+\alpha}(\mathbb R),
\end{equation}
where \(f_l(0)(y)\equiv0\) and
\(\gamma(0)=\gamma_l^*(\lambda)\).
For \(0<|\epsilon|<\epsilon_0\), this curve gives nonsymmetric traveling-wave
profiles bifurcating from the planar branch.
\end{thm}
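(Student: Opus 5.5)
We apply Theorem~\ref{thm: crandall-robinowitz} with \(W=X_1^{3+\alpha}(\mathbb R)\), \(Z=X_1^{2+\alpha}(\mathbb R)\), \(\mathcal F=F\), \(\mu=\gamma\), \(\mu_0=\gamma_l^*(\lambda)\) and \(w_0=\cos(ly)\). Hypothesis (1) is the trivial branch \eqref{eqn:trivial_branch_traveling}. For the smoothness of \(F\), we straighten \(\Omega_f\) onto \(\Omega_0\) via \((x,y)\mapsto(x-f(y)\chi(x),y)\) with a cutoff \(\chi\). This turns the nutrient transmission problem and the pressure problem into fixed-domain elliptic problems whose coefficients depend polynomially on \(f,f',f''\) and analytically on \(\gamma\). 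The implicit function theorem with Schauder estimates, as in Lemmas~3.2--3.3 of \cite{feng2025nonsymmetric}, then shows that \((f,\gamma)\mapsto F(f,\gamma)\) is \(C^k\) (indeed analytic) near \((0,\gamma_l^*)\). The estimate \eqref{eqn:functional_frechet_remainder_traveling} is the first-order instance of this expansion. Because the profiles are even in \(y\), the solutions stay even, and the constant Fourier mode is removed by the projection already built into the definition of \(F\).

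For hypothesis (2), Proposition~\ref{prop: frechet_derivative} shows that \(F_f(0,\gamma_l^*)\) is the Fourier multiplier \(a_j\mapsto \mathcal E_j(\lambda,\gamma_l^*)a_j\). Since \(\mathcal E_l(\lambda,\gamma_l^*)=0\), the kernel contains \(\cos(ly)\). To see that the kernel is exactly this line, we must check \(\mathcal E_j(\lambda,\gamma_l^*)\neq0\) for every \(j\neq l\). By \eqref{eqn:continuous_dispersion_sign_traveling}, the continuous curve \(l\mapsto\mathcal E_l(\lambda,\gamma_l^*)\) has a unique positive zero. That zero must therefore be \(l\) itself, so \(\mathcal E_j>0\) for \(j<l\) and \(\mathcal E_j<0\) for \(j>l\). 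In particular no other integer mode lies in the kernel.

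For hypothesis (3), the range is \(\{g\in Z:\hat g_l=0\}\), which is closed and has codimension one. Showing that every such \(g\) actually lies in the range is the main obstacle, because dividing by \(\mathcal E_j\) has to map \(C^{2+\alpha}\) into \(C^{3+\alpha}\). By \eqref{eqn:growth_rate_large_mode_traveling}, \(\mathcal E_j=-\sigma_0 j+\tfrac{1}{\sqrt\lambda+1}+O(j^{-1})\), so \(F_f(0,\gamma)=\sigma_0\,\Lambda+K\) with \(\Lambda\) the Dirichlet-to-Neumann-type operator \(-(-\partial_y^2)^{1/2}\) (up to sign and constant). The remainder \(K\) is smoothing of order zero or better. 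Concretely, I would write \(1/\mathcal E_j=-(\sigma_0 j)^{-1}\bigl(1+m_j\bigr)\), where \(m_j\) is a symbol of order \(-1\) with differences of all orders obeying symbol bounds, so that it is a Marcinkiewicz-type multiplier on periodic Hölder spaces. This gives that the inverse on the complement of \(\cos(ly)\) is bounded from \(X_1^{2+\alpha}\) to \(X_1^{3+\alpha}\). Alternatively, and more robustly, one can note that \(F_f(0,\gamma)\) comes from a coercive elliptic boundary problem (a Fredholm operator of index zero between these Hölder spaces), so closed range of codimension one follows directly from the kernel computation.

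For hypothesis (4), the mixed derivative is \(F_{f\gamma}(0,\gamma_l^*)\cos(ly)=\partial_\gamma\mathcal E_l(\lambda,\gamma_l^*)\cos(ly)\). This follows by differentiating the multiplier representation \eqref{eqn: eigenvalue_problem} in \(\gamma\), which is legitimate because \(F\) is \(C^2\) jointly. By the transversality \eqref{eqn:gamma_transversality_traveling}, this is a nonzero multiple of \(\cos(ly)\), so it is not in the range. Theorem~\ref{thm: crandall-robinowitz} then yields the curve \((f_l(\epsilon),\gamma(\epsilon))\) with \(f_l(\epsilon)=\epsilon\cos(ly)+o(\epsilon)\). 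For \(\epsilon\neq0\) the profile is nonconstant, so \(F=0\) gives a nonplanar profile whose normal Darcy velocity matches the translation speed \(\sigma_0\). The profile is therefore a nonsymmetric traveling wave.
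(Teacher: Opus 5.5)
Your proposal is correct and follows the paper's proof essentially step for step: the same choice of $W=X_1^{3+\alpha}(\mathbb R)$, $Z=X_1^{2+\alpha}(\mathbb R)$, $\mu_0=\gamma_l^*(\lambda)$ and $w_0=\cos(ly)$. Like the paper, you identify the kernel through the unique positive zero of the continuous curve $j\mapsto\mathcal E_j(\lambda,\gamma_l^*(\lambda))$, characterize the range using the $O(j^{-1})$ inverse multipliers from \eqref{eqn:growth_rate_large_mode_traveling}, and obtain transversality from \eqref{eqn:gamma_transversality_traveling}. Your symbol-class justification of the multiplier bound and your Fredholm alternative are simply more detailed versions of the paper's appeal to standard Fourier multiplier estimates. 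One small slip, which does not affect the argument: after straightening, the coefficients depend rationally (hence analytically) on $f,f',f''$, not polynomially.
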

\begin{proof}
We apply Theorem \ref{thm: crandall-robinowitz} to the
nonlinear map \eqref{eqn: nonlinear_functional_map} with
\begin{align*}
W&=X_1^{3+\alpha}(\mathbb R),
&Z&=X_1^{2+\alpha}(\mathbb R),
&\mu_0&=\gamma_l^*(\lambda),\\
\mathcal F&=F,
&\mu&=\gamma,
&w_0&=\cos(ly).
\end{align*}
The required \(C^k\)-smoothness of \(F\) follows from
the same Schauder-estimate argument used above to justify the
asymptotic expansions. Moreover, \eqref{eqn:trivial_branch_traveling} gives
\(F(0,\gamma)=0\) for all \(\gamma>0\), so the first hypothesis of
Theorem \ref{thm: crandall-robinowitz} holds.

To verify the second hypothesis, let \(L\) denote the
Fr\'{e}chet derivative \(F_f(0,\gamma_l^*(\lambda))\). For
\(\widetilde f=\sum_{j\geq1}a_j\cos(jy)\), Proposition
\ref{prop: frechet_derivative} gives
\[
L\widetilde f
=\sum_{j\geq1}a_j
\mathcal E_j(\lambda,\gamma_l^*(\lambda))\cos(jy).
\]
By \eqref{eqn:gamma_threshold_traveling},
\(\mathcal E_l(\lambda,\gamma_l^*(\lambda))=0\). Moreover,
Proposition \ref{prop: derivation_boundary_evolution} shows that the
function
\(j\mapsto\mathcal E_j(\lambda,\gamma_l^*(\lambda))\), viewed for
real \(j>0\), has a unique positive zero. Hence
\(\mathcal E_j(\lambda,\gamma_l^*(\lambda))\neq0\) for every positive
integer \(j\neq l\). Consequently,
\[
\ker L=\operatorname{span}\{\cos(ly)\}.
\]
Therefore, the second hypothesis holds.

On the other hand, since the \(l\)-th eigenvalue is the only one that vanishes, the same
diagonal representation gives
\[
\operatorname{Range}L
=\left\{g\in X_1^{2+\alpha}(\mathbb R):
\text{the \(l\)-th cosine coefficient of \(g\) is zero}\right\}.
\]
Indeed, \eqref{eqn:growth_rate_large_mode_traveling} shows that the
inverse multipliers for \(j\neq l\) are of order \(j^{-1}\), so the
above characterization follows from standard Fourier multiplier
estimates.  The range is therefore closed and has codimension one.
Thus, the third hypothesis holds.

Finally, differentiating the eigenvalue relation in \(\gamma\) gives
\[
F_{f\gamma}(0,\gamma_l^*(\lambda))\cos(ly)
=\partial_\gamma\mathcal E_l
(\lambda,\gamma_l^*(\lambda))\cos(ly).
\]
The coefficient on the right is strictly positive by
\eqref{eqn:gamma_transversality_traveling}.  The resulting function
has a nonzero \(l\)-th cosine coefficient and therefore does not belong
to \(\operatorname{Range}L\).  This verifies the last
condition. At this point, all four hypotheses of Theorem
\ref{thm: crandall-robinowitz} have been verified, and the stated branch
follows.
\end{proof}

\begin{rem}
{
Zhao and Shi \cite{zhao2025determination} showed, for a related
free-boundary tumor model, that carrying the asymptotic expansion to
higher orders identifies the nontrivial branch as a pitchfork
bifurcation.  To obtain the corresponding conclusion for the present
model, one would need to compute higher-order corrections to both the
nutrient and pressure fields and determine the first nonzero
coefficient in the expansion of
\(\gamma(\epsilon)-\gamma_l^*(\lambda)\).  We do not pursue this
higher-order calculation here.
}
\end{rem}

\section{Conclusion and outlook}

In this paper, we have investigated the influence of
pressure feedback on boundary instability in a nutrient-coupled
Hele--Shaw tumor growth model obtained as the incompressible limit of a
porous-medium model. The incompressible limit preserves the
patch structure of the tumor density, so the evolution is described by
a single free boundary. Using asymptotic analysis, we derived the
boundary evolution functions for finite-radius tumors and planar
traveling fronts. For finite-radius tumors, we established the
noncommuting limits \eqref{eqn:noncommuting_limits_radius} of the
evolution function, which indicate that pressure feedback
qualitatively changes the large-radius stability: for
\(0<\lambda\leq1\) and \(\gamma>0\), all single-Fourier-mode
perturbations become unstable as the tumor radius becomes sufficiently
large, whereas the model without pressure feedback remains stable. For
planar traveling
fronts, when \(0<\lambda<\lambda_0^l\), the \(l\)-th mode is stable at
\(\gamma=0\) but loses stability at the unique positive threshold
\(\gamma_l^*(\lambda)\). We further proved that each threshold gives
rise to a local branch of nonsymmetric traveling waves. All these
results indicate that
pressure feedback can induce boundary instabilities that are absent in
the model without pressure feedback.

Compared with the models without pressure feedback studied in
\cite{feng2023tumor,feng2025nonsymmetric}, the present model includes an
inhibitory pressure term associated with the homeostatic pressure.
Although this term suppresses cell proliferation, it does not destroy
the patch structure and therefore does not produce a necrotic core.
More general incompressible tumor models can describe the formation of
a necrotic core through an obstacle problem for the pressure
\cite{dou2024tumor,feng2026tumor}. In these models, the outer tumor
boundary remains a Darcy-law-driven Hele--Shaw free boundary, whereas
the inner necrotic interface is determined implicitly at each time by
the free boundary of the pressure obstacle problem. A natural direction
for future work is to investigate the stability of such two-interface
configurations and the interaction between perturbations of the outer
tumor boundary and the inner necrotic interface.

\bibliographystyle{plain}
\bibliography{ref}

\end{document}